\newcommand\e\varepsilon
\newcommand\R{\mathbb R}
\newcommand\de\partial
\newcommand\weakto\rightharpoonup
\renewcommand\le\leqslant
\renewcommand\ge\geqslant
\renewcommand\a\alpha
\renewcommand\b\beta
\renewcommand\d\delta
\newcommand\vfi\varphi
\newcommand\g\gamma
\newcommand\gb\gamma
\renewcommand\l\lambda
\newcommand\n\nabla
\newcommand\s\sigma
\renewcommand\t\theta
\renewcommand\O\S
\newcommand\G\Gamma
\renewcommand\S\Sigma
\renewcommand\L\Lambda
\newcommand\N{\mathbb N}
\renewcommand\o\S
\def\bbm[#1]{\text{\boldmath $#1$}}
\newcommand\beq{\begin{equation}}
\newcommand\eeq{\end{equation}}
\newtheorem{theorem}{Theorem}[section]
\newtheorem{lemma}[theorem]{Lemma}
\newtheorem{proposition}[theorem]{Proposition}
\newtheorem{remark}[theorem]{Remark}
\newtheorem{corollary}[theorem]{Corollary}
\numberwithin{equation}{section}
\def\sideremark#1{\ifvmode\leavevmode\fi\vadjust{\vbox to0pt{\vss
\hbox to0pt{\hskip\hsize\hskip1em
\vbox{\hsize3cm\tiny\raggedright\pretolerance10000
\noindent #1\hfill}\hss}\vbox to8pt{\vfil}\vss}}}
\title[A mean field problem approach for the double curvature prescription problem]
{A mean field problem approach for the double curvature prescription problem}
\author{Luca Battaglia}
\address{Luca Battaglia\\ 
Universit\`a degli Studi Roma Tre\\
Dipartimento di Matematica e Fisica\\
Via della Vasca Navale 84\\
00146 Roma, Italy.}
\email{luca.battaglia@uniroma3.it}
\author{Rafael L\'opez--Soriano}
\address{Rafael L\'opez--Soriano\\
Universidad de Granada\\
IMAG\\
Departamento de Análisis Matem\'atico\\
Campus Fuentenueva\\
18071 Granada, Spain.}
\email{ralopezs@ugr.es}
\keywords{Prescribed curvature problem, conformal metric, variational methods.\\The authors certify they have NO conflict of interest.}
\subjclass[2010]{35J20, 58J32.}
\begin{document}

\begin{abstract}
In this paper we establish a new mean field-type formulation to study the problem of prescribing Gaussian and geodesic curvatures on compact surfaces with boundary, which is equivalent to the following Liouville-type PDE with nonlinear Neumann conditions:
$$\left\{\begin{array}{ll}
-\Delta u+2K_g=2Ke^u&\text{in }\S\\
\de_\nu u+2h_g=2he^\frac u2&\text{on }\de\S.
\end{array}\right.$$
The underlying problem allows the application of straightforward variational techniques. Consequently, we provide three different existence results in the cases of positive, zero and negative Euler characteristics by means of variational techniques.
\end{abstract}

\maketitle

\section{Introduction}

Let $\S$ be a compact surface with boundary equipped with a metric $g$. We consider the following Neumann boundary value problem

\beq\label{gg0}
\left\{\begin{array}{ll}
-\Delta u+2K_g=2Ke^u&\text{in }\S\\
\de_\nu u+2h_g=2he^\frac u2&\text{on }\de\S,
\end{array}\right.
\eeq
where $\Delta=\Delta_g$ is the Laplace--Beltrami operator in $(\S,g)$, $\de_\nu$ is the normal derivative with $\nu$ being the outward normal vector to $\de\S$ and $K_g,K:\S\to\R$ and $h_g,h:\de\S\to\R$ are smooth.\\
This equation has a special interest due to its geometric meaning. In fact, it is equivalent to prescribing at the same time the Gaussian curvature in $\S$ and the geodesic curvature on $\de\S$. More precisely, given a metric $\tilde g=ge^u$ conformal to $g$, if $K_g,K$ are the Gaussian curvatures and $h_g,h$ are the geodesic curvatures of $\de\S$ with respect to the metrics $g,\tilde g$, then $u$ satisfies \eqref{gg0}.\\

Problem \eqref{gg0} can be seen as a natural generalization of the following Liouville-type equation on closed surfaces
\beq\label{liouville}
-\Delta u+2K_g=2Ke^u\qquad\text{in }\S,
\eeq
which is equivalent to prescribing the Gaussian curvature $K$ on $\S$.\\
Problem \eqref{liouville} has been very widely studied for several decades. For a survey on this topic we refer to the Chapter 6 in \cite{aubin} and the references therein.\\
On the other hand, literature concerning problem \eqref{gg0} is not as wide, unless in some special cases. For instance, the case of zero prescribed geodesic curvature $h\equiv0$ has been treated in \cite{chang2,liliu,liuhuang}, whereas the case of zero prescribed Gaussian curvature $K\equiv0$ in \cite{chyg}. The special case of constant curvatures $K\equiv K_0,h\equiv h_0$ has been studied in \cite{brendle}. In that case, solutions have been explicitely classified when $\S$ is a disk (see \cite{lizhu,zhang,mira-galvez}) or an annulus (\cite{asun}).\\
In the case of non-constant curvatures, problem \eqref{gg0} has been studied in \cite{lsmr} for a wide range of situations, under the assumption of negative Gaussian curvature $K(x)<0$ and non-positive Euler characteristic $\chi(\S)\le0$. The authors provided existence, uniqueness and non-existence results as well as a blow-up analysis.\\

In order to study problem \eqref{gg0}, some information can be easily deduced from the Gauss-Bonnet theorem. In fact, integrating both sides of \eqref{gg0} and then by parts, one obtains:
\beq\label{gb}
\int_\S Ke^u+\int_{\de\S}he^\frac u2=\int_\S K_g+\int_{\de\S}h_g=2\pi\chi(\S),
\eeq
which imposes necessary conditions on the functions $K,h$.\\
In particular, if $\chi(\S)>0$, that is $\S$ is topologically equivalent to the unit disk, then $K$ or $h$ must be positive somewhere in $\S$ or $\de\S$, respectively. Similarly, if $\chi(\S)<0$, then $K$ or $h$ must be negative somewhere. On the other hand, if $\chi(\S)=0$ then either $K$ or $h$ must change sign, or one of the two is everywhere non-negative and the other is everywhere non-positive.\\

It can be seen, using the Uniformization Theorem, that one can always prescribe zero geodesic curvature and constant Gaussian curvature equal to $\mathrm{sign}(\chi(\S))$ (more details can be found in \cite{lsmr}, Proposition 3.1). Geometrically, this is equivalent to working on a half-sphere in the case $\chi(\S)>0$, on a cylinder in the case $\chi(\S)=0$ and on a domain of the hyperbolic plane in the case $\chi(\S)<0$.\\
Therefore, one can assume without losing generality that $K_g\equiv\frac{2\pi\chi(\S)}{|\S|},h_g\equiv0$ and consider the following problem:
\beq\label{doublecurv}
\left\{\begin{array}{ll}
-\Delta u+\frac{4\pi\chi(\S)}{|\S|}=2Ke^u&\text{in }\S\\
\de_\nu u=2he^\frac u2&\text{on }\de\S.
\end{array}\right.
\eeq
From now on, we will focus on problem \eqref{doublecurv} instead of \eqref{gg0} without any further comment.\\

In \cite{lsmr,lssrr} solutions to \eqref{doublecurv} are found as critical points of the energy functional
$$\mathcal I(u):=\int_\S\left(\frac{|\nabla u|^2}2+\frac{4\pi\chi(\S)}{|\S|}u-2Ke^u\right)-4\int_{\de\S}he^\frac u2.$$
Such a formulation turns out to be very convenient when both $K$ and $\chi(\S)$ are non-positive, but it is not clear how it can be extended to other cases.\\
In this paper we propose an alternative and equivalent formulation to attack problem \eqref{doublecurv}, which seems to be more useful in several cases.\\
In particular, we will write problem \eqref{doublecurv} in an equivalent mean field form, which simplifies the variational analysis in a way. This approach is a widely-used tool to attack problem \eqref{liouville} on closed surfaces and have also been used in \cite{chyg,chang2,liliu,liuhuang} for problem \eqref{doublecurv} when either $K$ or $h$ identically vanishes.\\
Another mean field formulation for problem \eqref{doublecurv} was given in \cite{sergio}, although it seems to be working only if both curvature $K,h$ and $\chi(\S)$ are all positive. A double mean field problem related to \eqref{doublecurv} was also studied by the authors in \cite{bls}. However, such results of existence do not apply for the geometrical problem.\\
On the other hand, the mean field formulation here presented is totally equivalent in most of the cases and in all cases produces solutions for problem \eqref{doublecurv}. This will be discussed in further detail in Section 2.\\

The new mean field energy functional is defined on an open subset of $H^1(\S)$, which in general does not coincide with the whole space, and it has different forms depending whether $\chi(\S)$ is positive, zero or negative. The key role played by the sign of the Euler characteristic is consistent with the existence results from \cite{lsmr} as well as for many results concerning problem \eqref{liouville}.\\
In the case of a disk the new energy functional shares many similarities with Liouville-type problem where only one curvature is prescribed. In particular, using Trudinger-Moser type inequalities, one can show that the energy is bounded from below though not coercive. Section 3 elaborates further on this aspect.\\
Quite surprisingly, the picture is rather different for surfaces of non-positive genus. In fact, the main nonlinear term is now related to the interaction between the two nonlinear terms, which can have different signs.\\
In particular, in the case of negative $K(x)$, an important role will be played by the quotient function $\mathfrak D:\de\S\to\R$, defined by:
\beq\label{d}
\mathfrak D(x):=\frac{h(x)}{\sqrt{|K(x)|}}.
\eeq
The importance of $\mathfrak D(x)$ has already emphasized in \cite{lsmr,lssrr,bcp}.\\

We are now in position to state the main theorems of this work.\\
We start with the case $\chi(\S)>0$. Here, we need to prescribe a positive curvature $K$ in order to have coercivity of the energy functional on its domain of definition, which may have a nonempty boundary where the energy does not diverge; this is a new difficulty arising from this formulation. However, this does not seem to be a mere technicality, since when $K$ is not positive the problem may have no solutions, even in the case of constant prescribed curvatures on the disk (see \cite{zhang}).\\
On the other hand, to get coercivity we also need to assume some symmetry on $K,h$. In fact, this allows to look for solutions in the space of functions with the same symmetry of the prescribed curvatures, where one gets an improved constant in the Trudinger-Moser inequality. The role of symmetry was first highlighted in \cite{moser} for problem \eqref{liouville} and then in \cite{sergio} for problem \eqref{doublecurv}.\\
To be more precise, we will assume $\S$ to be the upper half-sphere
\beq\label{s}
\S:=\left\{(x_1,x_2,x_3)\in\R^3:\,x_1^2+x_2^2+x_3^2=1,x_3>0\right\},
\eeq
which we just showed to be non-restrictive, and we assume $K,h$ to be symmetric with respect to a subgroup of the orthonormal group of the plane $SO(2)$ having no fixed points on the unit circle $\mathbb S^1=\de\S$. This is equivalent to assume $K,h$ to be $k$-symmetric for some positive integer $k\in\N$ with $k\ge2$ (which one may also assume to be prime, without losing generality), namely:
$$K(\rho_kx)=K(x)\quad\forall\,x\in\S,\qquad h(\rho_ky)=h(y)\quad\forall\,y\in\de\S,$$
where
\beq\label{rhok}
\rho_k(x_1,x_2,x_3)=\left(x_1\cos\frac{2\pi}k-x_2\sin\frac{2\pi}k,x_1\sin\frac{2\pi}k+x_2\cos\frac{2\pi}k,x_3\right).
\eeq

\begin{theorem}\label{thm:chipositive}
Assume $\S$ is given by \eqref{s} and:
\begin{itemize}
\item $K(x)\ge0$ for all $x\in\S$, $K\not\equiv0$ and $K,h$ are $k$-symmetric.
\end{itemize}
Then problem \eqref{doublecurv} admits a solution.
\end{theorem}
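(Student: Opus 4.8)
The plan is to recast \eqref{doublecurv} with $\chi(\S)>0$ as the Euler--Lagrange equation of a mean field–type functional on an open subset of the space $H^1_k(\S)$ of $k$-symmetric $H^1$ functions, and then to obtain a solution as a global minimizer. Following the strategy announced in the introduction, I would first introduce the change of variables that turns \eqref{doublecurv} into the mean field form (this is the content of Section~2); the new functional, call it $J$, will contain a term of the type $-\log\left(\int_\S Ke^u+\tfrac12\int_{\de\S}he^{u/2}\right)$ (up to normalizations), which forces the domain of $J$ to be the open set $\mathcal O$ where that argument is positive --- this is exactly the ``domain with nonempty boundary'' difficulty mentioned above, and it is here that the hypothesis $K\ge0$, $K\not\equiv0$ is used to guarantee $\mathcal O\ne\emptyset$ and to control the sign of the relevant integral near the minimizing sequence.

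The core analytic step is a coercivity estimate for $J$ on $\mathcal O\cap H^1_k(\S)$. For this I would invoke a Trudinger--Moser inequality on $\S$ adapted to functions that are invariant under the fixed-point-free $\mathbb Z_k$-action generated by $\rho_k$ in \eqref{rhok}; such symmetry improves the optimal constant by a factor $k$ (the half-sphere version of Moser's symmetric inequality, and its boundary counterpart controlling $\int_{\de\S}e^{u/2}$ in terms of $\|\n u\|_2^2$). Combining the bulk inequality $\log\int_\S e^u\le \tfrac1{16\pi k}\|\n u\|_2^2+C\bar u+C$ with the boundary inequality $\log\int_{\de\S}e^{u/2}\le\tfrac1{16\pi k}\|\n u\|_2^2+C\bar u+C$ (here $\bar u$ the average), one estimates the logarithmic term of $J$ from above and, since $\chi(\S)>0$ contributes the factor $\tfrac{4\pi\chi(\S)}{|\S|}=\tfrac{4\pi}{|\S|}$ (as $\chi(\text{disk})=1$) in front of $\|\n u\|_2^2/2$ minus this log, the symmetry makes the coefficient strictly positive, yielding $J(u)\ge\alpha\|\n u\|_2^2-C$ on the symmetric subspace modulo the average, and the average is then controlled by the constraint/normalization. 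Hence $J$ is coercive there.

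With coercivity in hand I would take a minimizing sequence $u_n\in\mathcal O\cap H^1_k(\S)$, extract a weakly convergent subsequence $u_n\weakto u$ in $H^1_k(\S)$, pass to the limit in the (weakly lower semicontinuous) quadratic part and use compact Sobolev and trace embeddings together with the Moser--Trudinger control to pass to the limit in the exponential terms, concluding $u\in\mathcal O$ (the limit does not escape to the boundary of the domain precisely because the log term would blow up to $+\infty$ there, contradicting minimality) and $J(u)=\inf J$. By the principle of symmetric criticality $u$ is a critical point of $J$ on all of $H^1(\S)$, hence solves the mean field equation, and undoing the change of variables produces a solution of \eqref{doublecurv}. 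The main obstacle I anticipate is the boundary-of-domain analysis: ruling out that the minimizing sequence concentrates in such a way that $\int_\S Ke^{u_n}+\tfrac12\int_{\de\S}he^{u_n/2}\to0^+$, which requires a careful quantitative lower bound using $K\not\equiv0$, $K\ge0$ and the $k$-symmetry to prevent mass from disappearing into the region where $K$ vanishes or where $h<0$.
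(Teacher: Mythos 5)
Your overall strategy coincides with the paper's: recast \eqref{doublecurv} as a mean field problem, minimize the associated functional on the $k$-symmetric subspace $\widetilde H_k\subset\overline H^1(\S)$ via a factor-$k$ improvement of the Trudinger--Moser inequalities for both $\int_\S e^u$ and $\int_{\de\S}e^{u/2}$, and conclude by symmetric criticality (the paper's ``natural constraint''). Two points, however, need correction.

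First, and most substantially, the functional you write down, with nonlinear term $-\log\left(\int_\S Ke^u+\tfrac12\int_{\de\S}he^{\frac u2}\right)$, is not the right one ``up to normalizations''. Its Euler--Lagrange system has the form $-\Delta u+\mu=\lambda' Ke^u$ in $\S$, $\de_\nu u=\lambda'' he^{\frac u2}$ on $\de\S$ with $\lambda',\lambda''$ \emph{proportional} (both equal to $\lambda$ times the reciprocal of the same denominator, up to fixed factors), whereas adding a constant to $u$ reduces to \eqref{doublecurv} only if the interior multiplier equals (twice) the \emph{square} of half the boundary multiplier --- the two nonlinearities scale differently, $e^u$ versus $e^{\frac u2}$. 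For a critical point of your functional this amounts to an extra codimension-one condition on $\int_\S Ke^u+\tfrac12\int_{\de\S}he^{\frac u2}$ that is not automatically satisfied. The paper's key construction is precisely the function $F_1(\a,\b)=8\pi\left(\log\left(\sqrt{\b^2+8\pi\a}+\b\right)+\frac{\b}{\sqrt{\b^2+8\pi\a}+\b}\right)$, chosen so that $\de_\a F_1=2C(u)^2$ and $\de_\b F_1=4C(u)$, with $C(u)$ the positive root of $C^2\a+C\b=2\pi$; the second summand is bounded (Lemma \ref{bounded}) and the Trudinger--Moser analysis is applied to $\log\left(\sqrt{\b^2+8\pi\a}+\b\right)$, which is controlled by $\max\left\{\tfrac12\log\int_\S e^u,\log\int_{\de\S}e^{\frac u2}\right\}$. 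Without identifying this $F$, critical points of your functional do not solve \eqref{doublecurv}.

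Second, the ``main obstacle'' you anticipate --- minimizing sequences drifting to the boundary of the domain of definition --- is vacuous under the stated hypotheses: for $\chi(\S)=1$ the domain is $\left\{\int_\S Ke^u>-\tfrac1{8\pi}\left(\int_{\de\S}he^{\frac u2}\right)_+^2\right\}$, and $K\ge0$, $K\not\equiv0$ gives $\int_\S Ke^u>0$ for every $u$, so the domain is all of $\overline H^1(\S)$ and no quantitative lower bound near $\de H_1$ is needed; this is exactly why positivity of $K$ is assumed. The remainder of your argument (improved symmetric inequalities, coercivity with a coefficient of the form $\tfrac{1-\e}4$ on $\int_\S|\nabla u|^2$, direct method, symmetric criticality) matches the paper's proof.
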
\

Let us point out that with respect to the results given in \cite{sergio}, the previous theorem does not include any obstruction regarding the sign of the curvature $h$. Due to a perturbation argument used in \cite{sergio}, we can extend the preceding result for sign--changing $K$ with a small negative part. See Theorem~\ref{thm:perturb} for a complete statement of this result.

In the case $\chi(\S)=0$ we have two possible scenarios. If $K$ is somewhere positive, then the nonlinear term is everywhere negative, hence the energy functional is coercive and the problem admits a minimizing solution.\\
On the other hand, if $K$ is always negative, then we require a condition on the quantity $\mathfrak D$ to deal with the nonlinear term and get minimizers. This is actually a particular case of Theorem 1.2 in \cite{lsmr}. In both scenarios we shall make some assumptions on the sign of $h$ in order to avoid loss of coercivity in the boundary of the domain of the energy functional, as in Theorem \ref{thm:chipositive}.

\begin{theorem}\label{thm:chizero}
Assume $\chi(\S)=0$ and one of the following occurs:
\begin{enumerate}
\item $K(x)>0$ for some $x\in\S$ and $h(y)\le0$ for all $y\in\de\S$, $h\not\equiv0$;
\item $K(x)\ge0$ for all $x\in\S$, $K\not\equiv0$ and $\int_{\de\S}h<0$;
\item $K(x)\le0$ for all $x\in\S$, $K\not\equiv0$ and $h\not\equiv0,h(y)>0,\mathfrak D(y)<1$ for all $y\in\de\S$.
\end{enumerate}
Then problem \eqref{doublecurv} admits a solution.
\end{theorem}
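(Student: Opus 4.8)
The plan is to use the mean field reformulation (whose precise form in the case $\chi(\S)=0$ will be introduced in Section 2) and show that in each of the three cases the associated energy functional $J$ is coercive on its domain $\mathcal D\subset H^1(\S)$, so that a minimizer exists by the direct method. Since $\chi(\S)=0$, the ``main nonlinear term'' is governed by the interaction of the two nonlinearities $\int_\S Ke^u$ and $\int_{\de\S}he^{u/2}$, as the paper emphasises, and the structure of $\mathcal D$ (an open subset, possibly with nonempty boundary) is dictated by the signs of $K$ and $h$. The first step is therefore to pin down, in each case, which of the nonlinear terms has a definite sign, and to normalise $u$ by its average $\bar u$ so that one may work with $u-\bar u$ in a Trudinger--Moser / Moser--Trudinger-on-the-boundary inequality: the relevant inequalities are the one of Fontana type $\int_\S e^{u-\bar u}\le C\exp\left(\frac{1}{16\pi}\|\n u\|_2^2\right)$ and its boundary analogue $\int_{\de\S}e^{(u-\bar u)/2}\le C\exp\left(\frac{1}{16\pi}\|\n u\|_2^2\right)$ (with the same sharp constant, which is what makes $\chi(\S)=0$ the borderline case).

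For case (1), $K$ is somewhere positive but $h\le0$, $h\not\equiv0$: here the boundary nonlinearity contributes with a favourable sign, so the only term to control is $\int_\S Ke^u\le(\max_\S K)\int_\S e^u$; feeding in the Fontana inequality and the Gauss--Bonnet constraint \eqref{gb} (which now reads $\int_\S Ke^u+\int_{\de\S}he^{u/2}=0$) one gets coercivity, with the sign condition $h\le0,\ h\not\equiv0$ being exactly what prevents the minimising sequence from escaping to the boundary of $\mathcal D$. Case (2) is dual: now $K\ge0$, $K\not\equiv0$ while $h$ may change sign, but $\int_{\de\S}h<0$; one bounds $\int_{\de\S}he^{u/2}$ from above using the negative average of $h$ together with the Poincar\'e--trace inequality (writing $h=h_-+h_+$ and absorbing $\int h_+ e^{u/2}$ against $\|\n u\|_2^2$ with a small constant), again obtaining coercivity and control near $\de\mathcal D$.

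Case (3) is the genuinely different one and I expect it to be the main obstacle: $K\le0$, so $\int_\S Ke^u\le0$ is favourable, but $h>0$ everywhere, so the boundary term $\int_{\de\S}he^{u/2}>0$ works against us, and coercivity can only hold because of the interaction with the (negative) interior term. This is where the hypothesis $\mathfrak D(y)=h(y)/\sqrt{|K(y)|}<1$ enters: one should estimate, for $y\in\de\S$, using Young's inequality pointwise,
$$h(y)e^{u(y)/2}=\mathfrak D(y)\sqrt{|K(y)|}\,e^{u(y)/2}\le\frac{\mathfrak D(y)^2}{2}\,\chi(y)+\tfrac12|K(y)|e^{u(y)/2\cdot 2}\quad\text{(schematically)},$$
so that a suitable combination of $\int_{\de\S}he^{u/2}$ and $\int_\S|K|e^{u}$ stays bounded, and then invoke the two Trudinger--Moser inequalities above with their common sharp constant; since $\mathfrak D<1$ strictly and $\de\S$ is compact, $\sup_{\de\S}\mathfrak D<1$, which gives the strict inequality needed to beat the borderline constant and conclude coercivity. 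As the paper notes, this case is subsumed by Theorem 1.2 of \cite{lsmr}, so an alternative (and perhaps cleaner) route is simply to verify that the hypotheses of that theorem are met and quote it; I would present the self-contained coercivity argument and remark on the reduction to \cite{lsmr}. In all three cases, once coercivity and weak lower semicontinuity of $J$ on the (weakly closed portion of the) domain are established, a standard minimisation yields a critical point, which by the equivalence discussed in Section 2 produces a solution of \eqref{doublecurv}.
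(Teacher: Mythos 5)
Your overall skeleton (mean-field reformulation, coercivity on the domain, direct method) matches the paper, but the substance of the argument is off in all three cases and two genuine gaps remain. For $\chi(\S)=0$ the mean-field functional is $\mathcal J(u)=\frac12\int_\S|\nabla u|^2+2\left(\int_{\de\S}he^{u/2}\right)^2/\int_\S Ke^u$, so in cases (1) and (2), where $\int_\S Ke^u>0$, the nonlinear term is \emph{nonnegative} and $\mathcal J(u)\ge\frac12\int_\S|\nabla u|^2$: no Fontana or Trudinger--Moser inequality is needed, and your plan to absorb $\int_{\de\S}h_+e^{u/2}$ against $\|\nabla u\|_2^2$ addresses a difficulty that is not present. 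The real gap is in case (2): since $h$ may change sign, $\int_{\de\S}he^{u/2}$ need not be negative, so a minimizer a priori solves only the Euler--Lagrange system with multiplier $C(u)=-\int_{\de\S}he^{u/2}/\int_\S Ke^u$, which could be zero or negative, and in either of those situations one does \emph{not} obtain a solution of \eqref{doublecurv}. The hypothesis $\int_{\de\S}h<0$ is used precisely to exclude this: $C(u)=0$ forces $u\equiv0$ and hence $\int_{\de\S}h=0$, while $C(u)<0$ would produce a solution of \eqref{doublecurv} with $-h$ in place of $h$, and testing that equation with $e^{-u/2}$ yields $\int_{\de\S}h>0$, a contradiction. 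Your proposal never confronts this step.

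In case (3) the decisive tool is not a pointwise Young inequality combined with sharp Trudinger--Moser constants. Your schematic bound compares a boundary integrand with an interior integrand pointwise, which cannot by itself produce an inequality between $\int_{\de\S}he^{u/2}$ and $\int_\S|K|e^u$ (they are integrals over different sets), and in any event the functional for $\chi(\S)=0$ contains no logarithmic term on which the Trudinger--Moser inequalities could act. What is needed is the trace-type estimate
$$\frac{\left(\int_{\de\S}he^{u/2}\right)^2}{\int_\S|K|e^u}\le\frac{\left(\mathfrak D_M+\e\right)^2}4\int_\S|\nabla u|^2+C_\e,\qquad\mathfrak D_M:=\max_{\de\S}|\mathfrak D|,$$
which the paper proves (Proposition \ref{trace}) by applying the divergence theorem to $\Psi\phi_je^u$ for a vector field $\Psi$ extending the outer normal and a fine partition of unity $\{\phi_j\}$ on $\de\S$, followed by Cauchy--Schwarz. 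The hypothesis $\mathfrak D<1$ then yields the positive coefficient $\frac{1-(\mathfrak D_M+\e)^2}2$ in front of $\int_\S|\nabla u|^2$ and hence coercivity. Without this inequality your case (3) does not close; quoting Theorem 1.2 of \cite{lsmr} would be a legitimate alternative, but the self-contained route you sketch is not a proof.
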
\

Finally, in the case $\chi(\S)<0$ we get minimizing solutions under an assumption on $\mathfrak D$, similarly as Theorem \ref{thm:chizero}. This is the same result as Theorem 1.1 in \cite{lsmr}, although obtained with different arguments.

\begin{theorem}\label{thm:chinegative}
Assume $\chi(\S)<0$ and:
\begin{itemize}
\item $K(x)\le0$ for all $x\in\S$, $K\not\equiv0$ and $\mathfrak D(y)<1$ for all $y\in\de\S$.
\end{itemize}
Then problem \eqref{doublecurv} admits a solution.
\end{theorem}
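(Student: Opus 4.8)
The plan is to recast problem \eqref{doublecurv} with $\chi(\S)<0$ as a mean field problem, as announced in Section 2, and then minimize the associated energy functional. Since $\chi(\S)<0$ and $K\le0$, the Gauss-Bonnet relation \eqref{gb} forces $\int_\S K e^u + \int_{\de\S} h e^{u/2} = 2\pi\chi(\S)<0$; in the mean field reformulation one typically introduces two parameters (one for the interior term $\int_\S |K| e^u$ and one for the boundary term) tied together by this constraint, or — following the one-parameter mean field philosophy — writes the problem after subtracting off the constraint so that the functional is defined on (an open subset of) $H^1(\S)$. The energy functional will take the schematic form
$$
\E(u)=\frac12\int_\S|\n u|^2 + \frac{4\pi\chi(\S)}{|\S|}\int_\S u - 2\pi\chi(\S)\log\!\left(-\int_\S K e^u - \int_{\de\S} h e^{u/2}\right),
$$
or some close variant of it adapted to the sign conventions fixed in Section 2; the domain is the open set where the argument of the logarithm is positive.

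The core of the proof is a lower bound on $\E$ that yields coercivity. First I would split $\int_\S u = \int_\S \bar u + \int_\S (u-\bar u)$ where $\bar u$ is the average, and apply the Moser-Trudinger inequality on $\S$ (with Neumann boundary, in the form $\log\int_\S e^{u-\bar u}\le \frac{1}{16\pi}\int_\S|\n u|^2 + C$) to control $\log\int_\S |K|e^u$. The delicate point is the boundary term $\int_{\de\S} h e^{u/2}$: here I would use the boundary Moser-Trudinger inequality (the one with constant $8\pi$ controlling $\log\int_{\de\S}e^{(u-\bar u)/2}$ by $\frac1{16\pi}\int_\S|\n u|^2$), but that alone is not enough because $h$ can be positive somewhere and the boundary exponential $e^{u/2}$ has the same critical constant as the interior $e^u$. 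This is exactly where the hypothesis $\mathfrak D(y)<1$ enters: one combines the two nonlinear terms, writing pointwise on $\de\S$
$$
h e^{u/2} = \mathfrak D \sqrt{|K|}\, e^{u/2} \le \mathfrak D\left(\frac{|K|e^u}{2t} + \frac{t}{2}\right)
$$
for a suitable $t>0$, or more sharply uses an interpolation/Young-type inequality so that the boundary term is absorbed into a fraction $<1$ of the interior term $-\int_\S K e^u$. Since $\sup_{\de\S}\mathfrak D<1$, this leaves a strictly positive multiple of $-\int_\S K e^u$ inside the logarithm, and one concludes $\log(-\int_\S Ke^u - \int_{\de\S}he^{u/2}) \le \frac{1}{16\pi}\int_\S|\n u|^2 + \int_\S \bar u/|\S| + C$ (schematically). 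Because $\chi(\S)<0$, the coefficient $-2\pi\chi(\S)>0$ multiplying this log is positive; plugging in, the $\int_\S|\n u|^2$ terms combine with a coefficient $\frac12 - \frac{-2\pi\chi(\S)}{16\pi} = \frac12 + \frac{\chi(\S)}{8}$, which I would need to check is positive — and if $\chi(\S)$ is very negative this naive estimate fails, so the genuinely careful version must exploit the factor $<1$ gained from $\mathfrak D$ and (more importantly) the fact that the Moser-Trudinger constant for the combined interior-plus-boundary functional is the same $16\pi$ regardless of how the mass is distributed, so that one does \emph{not} lose a factor from treating the two terms separately. In any case, after this step $\E(u)\ge \epsilon\int_\S|\n u|^2 + \frac{\mathrm{const}}{|\S|}\left(\frac{4\pi\chi(\S)}{1}-(-2\pi\chi(\S))\right)\int_\S\bar u + C$; since the $\bar u$ coefficient works out to $2\pi\chi(\S)<0$ times $(\text{something})$... the linear-in-average term must cancel, which it does precisely because of the structure inherited from \eqref{gb}, leaving $\E$ coercive on $H^1(\S)$ modulo constants, hence on the domain after fixing the average.

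With coercivity and (weak) lower semicontinuity of $\E$ — the $H^1$ norm is weakly lsc, the exponential terms pass to the limit by the compact Moser-Trudinger embedding $H^1(\S)\hookrightarrow e^{L}$, and one must check the limit stays in the open domain, i.e. the argument of the log stays positive, which follows because the log term is bounded below along a minimizing sequence — a minimizer $u_0$ exists. It is then a standard computation that critical points of $\E$ on its domain correspond, after the change of variables spelled out in Section 2 (rescaling $u_0$ by an additive constant determined by the Lagrange multiplier / the value of the nonlinear terms), exactly to solutions of \eqref{doublecurv}; this equivalence was set up in Section 2 and I would simply invoke it. The main obstacle, as indicated, is the coercivity estimate: getting the sharp constant in the combined interior-boundary Moser-Trudinger inequality and correctly absorbing the (possibly positive) boundary term using $\mathfrak D<1$, so that the quadratic gradient term survives with a positive coefficient and the linear averaging terms cancel. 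This is where the hypothesis $\mathfrak D(y)<1$ is essential and cannot be dropped.
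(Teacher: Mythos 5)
Your overall architecture (mean field reformulation from Section 2, minimize the energy on its domain, coercivity is the crux, $\mathfrak D<1$ is what saves the boundary term) matches the paper, but the coercivity argument you sketch is not the one that works, and you yourself flag the fatal symptom without curing it: estimating the logarithmic term by Moser--Trudinger costs a fixed multiple of $\int_\S|\n u|^2$ \emph{proportional to $|\chi(\S)|$}, and the resulting gradient coefficient $\frac12+\frac{\chi(\S)}8$ is negative as soon as $\chi(\S)<-4$. No refinement of the Moser--Trudinger constant fixes this, because Moser--Trudinger is simply the wrong tool here: since $K\le0$ the interior term $\log\int_\S|K|e^u$ enters the functional with a \emph{favorable} sign and is bounded below by Jensen's inequality alone --- no gradient is spent on it. The dangerous term is purely the interaction $\b^2/|\a|$ with $\a=\int_\S Ke^u$, $\b=\int_{\de\S}he^{u/2}$ (after bounding $F_{\chi(\S)}(-1,t)\le(2+\e)t_+^2+C_\e$, the paper's Lemma \ref{lemmaft}), and the key missing ingredient in your proposal is the trace-type inequality of Proposition \ref{trace}: $\bigl(\int_{\de\S}he^{u/2}\bigr)^2/\int_\S|K|e^u\le\frac{(\mathfrak D_M+\e)^2}4\int_\S|\nabla u|^2+C_\e$, proved by applying the divergence theorem to $\Psi\phi_je^{u/2}$ for a vector field $\Psi$ with $\Psi|_{\de\S}=\nu$ and then Cauchy--Schwarz. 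This yields the gradient coefficient $\frac{2-(2+\e)(\mathfrak D_M^++\e)^2}4$, which is positive precisely when $\mathfrak D<1$ and is independent of $\chi(\S)$.

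Two further concrete problems. First, your pointwise absorption $he^{u/2}\le\mathfrak D\bigl(\frac{|K|e^u}{2t}+\frac t2\bigr)$ compares a density on $\de\S$ with a density on $\S$; integrating the left side over the boundary and the right side over the interior is not a legitimate step --- converting boundary mass into interior mass is exactly what requires the divergence-theorem/trace argument above. Second, the functional you write down, with the single term $-2\pi\chi(\S)\log\bigl(-\int_\S Ke^u-\int_{\de\S}he^{u/2}\bigr)$, is not the functional $\mathcal J$ of Proposition \ref{energy}; its Euler--Lagrange equation does not produce the correct pair of constants $(C(u)^2,C(u))$ in front of $Ke^u$ and $he^{u/2}$, so even granting coercivity the critical points would not solve \eqref{meanfieldeq}. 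The correct $F_{\chi(\S)}$ involves $\sqrt{\b^2-8\pi|\chi(\S)|\a}-\b$ plus a non-logarithmic correction, and controlling that correction is what Lemma \ref{lemmaft} is for.
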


We point out that Theorem \ref{thm:chinegative} includes the case of a non-positive $h(y)\le0$, which can be equivalently written as $\mathfrak D(y)\le0$ for all $y\in\de\S$.\\

The paper is structured as follows. In Section 2 we introduce the new equivalent formulation for problem, \eqref{doublecurv}, the underlying energy functional and an appropriate functional setting. In Section 3 we assume $\chi(\S)>0$ and we prove Theorem \ref{thm:chipositive}. In Section 4 we assume $\chi(\S)\le0$, which requires different arguments with respect to the previous case, and we provide the proofs for Theorems \ref{thm:chizero} and \ref{thm:chinegative}.

\

\section{Mean field formulation and setting}

We start by introducing a new mean field problem and showing its equivalence with problem \eqref{doublecurv}.\\
Actually, the mean field problem will be a stronger formulation, in the sense that its solvability will imply the solvability of problem \eqref{doublecurv}, but in most of the cases we will consider the two problems are in fact equivalent.

\begin{proposition}\label{meanfield}
Problem \eqref{doublecurv} has a solution if the following problem has a solution:
\beq\label{meanfieldeq}
\left\{\begin{array}{ll}
-\Delta u+\frac{4\pi\chi(\S)}{|\S|}=2C(u)^2Ke^u&\text{in }\S\\
\de_\nu u=2C(u)he^\frac u2&\text{on }\de\S,\\
C(u)>0
\end{array}\right.
\eeq
where $C(u)$ is a constant defined by:
\beq\label{c}
C(u):=\left\{\begin{array}{ll}\frac{4\pi\chi(\S)}{\sqrt{\left(\int_{\de\S}he^\frac u2\right)^2+8\pi\chi(\S)\int_\S Ke^u}+\int_{\de\S}he^\frac u2}&\text{if }\chi(\S)>0\\-\frac{\int_{\de\S}he^\frac u2}{\int_\S Ke^u}&\text{if }\chi(\S)=0\\\frac{4\pi|\chi(\S)|}{\sqrt{\left(\int_{\de\S}he^\frac u2\right)^2-8\pi|\chi(\S)|\int_\S Ke^u}-\int_{\de\S}he^\frac u2}&\text{if }\chi(\S)<0.\end{array}\right.
\eeq
Moreover, if either $\chi(\S)=0$ or $\chi(\S)K(x)>0$ for all $x\in\S$, then the two problems are equivalent.
\end{proposition}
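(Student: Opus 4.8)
The plan is to show that a solution $u$ of the mean field system \eqref{meanfieldeq} is, after an additive constant shift, a solution of \eqref{doublecurv}, and conversely (in the stated cases) that a solution of \eqref{doublecurv} produces a solution of \eqref{meanfieldeq}. The guiding idea is that the scaling $u\mapsto u+c$ multiplies $e^u$ by $e^c$ and $e^{u/2}$ by $e^{c/2}$, so the two nonlinear coefficients $2Ke^u$ and $2he^{u/2}$ are rescaled by $e^c$ and $e^{c/2}$ respectively; choosing $e^{c/2}=C(u)$ turns the system \eqref{meanfieldeq} into \eqref{doublecurv}. Concretely, given a solution $u$ of \eqref{meanfieldeq}, set $v:=u+2\log C(u)$ (this uses $C(u)>0$); then $e^v=C(u)^2e^u$ and $e^{v/2}=C(u)e^{u/2}$, so $-\Delta v+\frac{4\pi\chi(\S)}{|\S|}=-\Delta u+\frac{4\pi\chi(\S)}{|\S|}=2C(u)^2Ke^u=2Ke^v$ in $\S$, and likewise $\de_\nu v=\de_\nu u=2C(u)he^{u/2}=2he^{v/2}$ on $\de\S$. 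Hence $v$ solves \eqref{doublecurv}, proving the first assertion.

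For the converse under the hypothesis $\chi(\S)=0$ or $\chi(\S)K(x)>0$ for all $x$, I would start from a solution $u$ of \eqref{doublecurv} and look for $c\in\R$ such that $v:=u-2\log(e^{c/2})=u-c$ wait — more cleanly, I seek $\lambda>0$ with $u=v+2\log\lambda$ where $v$ solves \eqref{meanfieldeq} with $C(v)=\lambda$, i.e. I must verify that the constant $\lambda$ forced by the requirement $C(v)=\lambda$ is consistent and positive. Substituting $v=u-2\log\lambda$ into the defining formula \eqref{c} and using $\int_\S Ke^v=\lambda^{-2}\int_\S Ke^u$, $\int_{\de\S}he^{v/2}=\lambda^{-1}\int_{\de\S}he^{u/2}$, together with the Gauss–Bonnet identity \eqref{gb} for $u$, namely $\int_\S Ke^u+\int_{\de\S}he^{u/2}=2\pi\chi(\S)$, one checks by direct computation that $C(v)=\lambda$ holds identically, so any $\lambda>0$ works and $v$ automatically solves the first two equations of \eqref{meanfieldeq}; the only genuine content is the sign condition $C(v)>0$. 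The key algebraic point is that the three branches of \eqref{c} are precisely the unique positive root (when it exists) of the quadratic $\chi(\S)>0$: $4\pi\chi(\S)C^{-2}\big/\!\ldots$, more precisely of $\,4\pi\chi(\S)=C^2\int_\S Ke^u\cdot(\text{stuff})\,$ — I would phrase it as: $C$ is the positive solution of $2\pi\chi(\S)=C^2\int_\S Ke^u+C\int_{\de\S}he^{u/2}$ when $\chi(\S)\neq 0$ (and the linear equation $0=C^2\int_\S Ke^u+C\int_{\de\S}he^{u/2}$, i.e. $C=-\int_{\de\S}he^{u/2}\big/\int_\S Ke^u$, when $\chi(\S)=0$), and then read off from the discriminant and from \eqref{gb} that this root is real and strictly positive exactly under the stated sign assumptions on $\chi(\S)$ and $\chi(\S)K$.

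The main obstacle — and the only step requiring care — is the sign analysis of $C(u)$ in the three cases, i.e. verifying that the quadratic $C^2\int_\S Ke^u+C\int_{\de\S}he^{u/2}-2\pi\chi(\S)=0$ really does have a positive root given by the corresponding branch of \eqref{c}. When $\chi(\S)>0$ and $K\ge0$ (so $\int_\S Ke^u\ge 0$), the product of the roots is $-2\pi\chi(\S)/\int_\S Ke^u<0$, forcing exactly one positive and one negative root, and the formula in \eqref{c} (with the $+$ sign in front of $\int_{\de\S}he^{u/2}$ in the denominator) selects the positive one; one must also handle the degenerate subcase $\int_\S Ke^u=0$, where the equation becomes linear and \eqref{gb} forces $\int_{\de\S}he^{u/2}=2\pi\chi(\S)>0$, giving $C=1>0$. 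When $\chi(\S)<0$ and $K\le0$, the analogous computation (product of roots $=2\pi|\chi(\S)|/\int_\S|K|e^u>0$, sum of roots $=\int_{\de\S}he^{u/2}/\int_\S|K|e^u$, discriminant $=(\int_{\de\S}he^{u/2})^2+8\pi|\chi(\S)|\int_\S|K|e^u>0$) shows both roots are real with positive product; one checks the root in \eqref{c} is the positive one, using \eqref{gb} to pin down the sign of $\int_{\de\S}he^{u/2}$ if needed. Finally, for $\chi(\S)=0$ the relation \eqref{gb} reads $\int_\S Ke^u=-\int_{\de\S}he^{u/2}$, so $C(u)=-\int_{\de\S}he^{u/2}\big/\int_\S Ke^u=1>0$ automatically, which is why the two problems are always equivalent in that case. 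Assembling these observations yields the proposition.
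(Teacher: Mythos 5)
Your argument is correct in substance and follows essentially the same route as the paper: shift by $2\log C(u)$ (which is where $C(u)>0$ is used) to pass from \eqref{meanfieldeq} to \eqref{doublecurv}, and for the converse substitute the Gauss--Bonnet identity \eqref{gb} into the formula \eqref{c} to obtain $C(u)=1$ under the stated hypotheses (your ``$C(v)=\lambda$ for every $\lambda>0$'' identity is exactly this computation, with $\lambda=1$ already sufficing). One remark: the closing root-sign analysis is redundant once $C(u)=1$ is known, and it contains a sign slip --- for $\chi(\S)<0$ and $K\le0$ the product of the roots of $C^2\int_\S Ke^u+C\int_{\de\S}he^{\frac u2}-2\pi\chi(\S)=0$ equals $-2\pi\chi(\S)\big/\int_\S Ke^u=-2\pi|\chi(\S)|\big/\int_\S|K|e^u<0$, not $>0$ as you assert, so here too there is exactly one positive root; this does not affect your conclusion, since \eqref{c} does select that root.
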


\begin{proof}

Let $u$ be a solution of \eqref{meanfieldeq}. Then for any $c\in\R$, the function $v=u+c$ satisfies:
$$\left\{\begin{array}{ll}
-\Delta v+\frac{4\pi\chi(\S)}{|\S|}=2C(u)^2Ke^{v-c}&\text{in }\S\\
\de_\nu u=2C(u)he^\frac{v-c}2&\text{on }\de\S,\end{array}\right.$$
which solves \eqref{doublecurv} with the choice $c=2\log C(u)$.

Now, suppose that $u$ is a solution of \eqref{doublecurv}. Then, by \eqref{gb} one gets $\int_{\de\S}he^\frac u2=2\pi\chi(\S)-\int_\S Ke^u$ which, in the definition \eqref{c} of $C(u)$, gets $C(u)=1$ in the cases $\chi(\S)=0$ or $\chi(\S)K>0$. Therefore $u$ solves \eqref{meanfieldeq}.
\end{proof}\

\begin{remark}
Observe that $C(u)$ is one of the roots of the second order equation
$$
C^2(u) \int_\S Ke^u + C(u) \int_{\de\S} he^{\frac u2} = 2\pi \chi(\S).
$$
The choice of one root rather than the other is the reason why problems \eqref{doublecurv} and \eqref{meanfieldeq} are not totally equivalent.\\
One may verify that in \eqref{c} we chose the largest of the two roots, which is a convenient choice since, in virtue of the equivalence of both problems, we want it to be positive.
\end{remark}

\begin{remark}\label{hzero}
Let us point out that the condition $C(u)>0$ is necessary in order to guarantee the equivalence between the original problem and the mean field problem, otherwise a solution of \eqref{meanfieldeq} may be trivial. For instance, if $\chi(\S)=0$ and $\int_{\de\S}h=0$, then $u=0$ would be a solution of \eqref{meanfield} whereas the constant solution does not verify \eqref{doublecurv} unless $K\equiv0,h\equiv0$. 
\end{remark}\

Such a mean field formulation admits an energy functional which, in many cases, can be treated rather easily with respect to the direct formulation. This is one of the main reasons why this new formulation has been introduced.

\begin{proposition}\label{energy}
Solutions to \eqref{meanfieldeq} are critical points on the space $H_{\chi(\S)}$ of the following energy functional:
\beq\label{j}
\mathcal J(u):=\frac12\int_\S|\nabla u|^2-F_{\chi(\S)}\left(\int_\S Ke^u,\int_{\de\S}he^\frac u2\right);
\eeq
here,
$$H_{\chi(\S)}:=\left\{u\in\overline H^1(\S):\,\left\{\begin{array}{ll}\int_\S Ke^u>-\frac1{8\pi\chi(\S)}\left(\int_{\de\S}he^\frac u2\right)_+^2&\mbox{if }\chi(\S)>0\\\int_\S Ke^u\int_{\de\S}he^\frac u2<0&\mbox{if }\chi(\S)=0\\\int_\S Ke^u<-\frac1{8\pi|\chi(\S)|}\left(\int_{\de\S}he^\frac u2\right)_-^2&\mbox{if }\chi(\S)<0\end{array}\right.\right\};$$
$t_+:=\max\{t,0\},t_-:=\max\{-t,0\}$ denote respectively the positive and negative part of a real number $t$,
$$\overline H^1(\S):=\left\{u\in H^1(\S):\,\int_\S u=0\right\},$$
and
$$F_{\chi(\S)}(\a,\b):=\left\{\begin{array}{ll}8\pi\chi(\S)\left(\log\left(\sqrt{\b^2+8\pi\chi(\S)\a}+\b\right)+\frac\b{\sqrt{\b^2+8\pi\chi(\S)\a}+\b}\right)&\text{if }\chi(\S)>0\\-2\frac{\b^2}\a&\mbox{if }\chi(\S)=0\\8\pi|\chi(\S)|\left(-\log\left(\sqrt{\b^2-8\pi|\chi(\S)|\a}-\b\right)+\frac\b{\sqrt{\b^2-8\pi|\chi(\S)|\a}-\b}\right)&\mbox{if }\chi(\S)<0\end{array}\right..$$
\end{proposition}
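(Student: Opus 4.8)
The plan is to identify the critical points of $\mathcal J$ on the open set $H_{\chi(\S)}$ with the zero--average solutions of \eqref{meanfieldeq}; since \eqref{meanfieldeq} is invariant under the translations $u\mapsto u+c$ --- as follows at once from the scaling $C(u+c)=e^{-c/2}C(u)$ of the constant in \eqref{c} --- this then gives the assertion for arbitrary solutions. First I would check that $\mathcal J$ is well defined and of class $C^1$ on $H_{\chi(\S)}$: by the Moser--Trudinger inequality and its trace analogue on $\de\S$, the maps $\a(u):=\int_\S Ke^u$ and $\b(u):=\int_{\de\S}he^\frac u2$ are $C^1$ on $H^1(\S)$, and a direct inspection of the three formulas shows that $H_{\chi(\S)}$ is precisely the open subset of $\overline H^1(\S)$ on which $\big(\a(u),\b(u)\big)$ lies in the domain where $F_{\chi(\S)}$ is smooth --- equivalently, the subset on which $C(u)$ in \eqref{c} is well defined and strictly positive.

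The heart of the argument is the computation of the differential. For $v\in\overline H^1(\S)$ the chain rule gives
\[
\mathcal J'(u)[v]=\int_\S\n u\cdot\n v-\de_\a F_{\chi(\S)}\big(\a(u),\b(u)\big)\int_\S Ke^uv-\frac12\,\de_\b F_{\chi(\S)}\big(\a(u),\b(u)\big)\int_{\de\S}he^\frac u2v ,
\]
and the key point is the pair of identities $\de_\a F_{\chi(\S)}(\a,\b)=2C^2$ and $\de_\b F_{\chi(\S)}(\a,\b)=4C$, where $C=C(\a,\b)$ stands for the quantity \eqref{c} with $\int_\S Ke^u,\int_{\de\S}he^\frac u2$ replaced by $\a,\b$. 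I expect this verification to be the main --- and essentially the only nontrivial --- obstacle, and it has to be carried out in each of the three regimes $\chi(\S)>0$, $\chi(\S)=0$ and $\chi(\S)<0$ separately. A convenient way to organise it is to note that $C=C(\a,\b)$ is the larger root of $C^2\a+C\b=2\pi\chi(\S)$; implicit differentiation then yields $\de_\a C=-C^2/(2C\a+\b)$ and $\de_\b C=-C/(2C\a+\b)$, from which one both sees that the form $2C^2\,d\a+4C\,d\b$ is closed (hence admits a primitive) and, after fixing the additive constant, identifies that primitive with the explicit $F_{\chi(\S)}$.

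Granting these identities, $\mathcal J'(u)[v]=\int_\S\n u\cdot\n v-2C(u)^2\int_\S Ke^uv-2C(u)\int_{\de\S}he^\frac u2v$ for all $v\in\overline H^1(\S)$. If $u$ is a critical point, the right--hand side vanishes for every mean--zero $v$; writing an arbitrary $w\in H^1(\S)$ as its average plus a mean--zero part and using once more that $C(u)^2\a(u)+C(u)\b(u)=2\pi\chi(\S)$, one obtains, for all $w\in H^1(\S)$,
\[
\int_\S\n u\cdot\n w+\frac{4\pi\chi(\S)}{|\S|}\int_\S w=2C(u)^2\int_\S Ke^uw+2C(u)\int_{\de\S}he^\frac u2w ,
\]
which is precisely the weak form of \eqref{meanfieldeq}; the sign condition $C(u)>0$ is ensured by $u\in H_{\chi(\S)}$, and elliptic regularity up to the boundary promotes $u$ to a classical solution. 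Conversely, given any solution of \eqref{meanfieldeq}, one first translates it so that $\int_\S u=0$ and then reads the above computation backwards to conclude $\mathcal J'(u)=0$ on $\overline H^1(\S)$.
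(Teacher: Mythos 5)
Your proposal is correct and follows essentially the same route as the paper: check that $F_{\chi(\S)}$ is smooth on $H_{\chi(\S)}$, compute the Euler--Lagrange equation, and reduce everything to the identities $\de_\a F_{\chi(\S)}=2C^2$, $\de_\b F_{\chi(\S)}=4C$, which you justify via implicit differentiation of $C^2\a+C\b=2\pi\chi(\S)$ while the paper verifies them directly. Your handling of the zero-average constraint by testing against $w=\bar w+(w-\bar w)$ is just a rephrasing of the paper's Lagrange multiplier $\mu=\frac{4\pi\chi(\S)}{|\S|}$, so the two arguments are equivalent.
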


\begin{proof}
One easily verifies that $F_{\chi(\S)}(\a,\b)$ is well defined and smooth provided $\a,\b$ are such that $u\in H_{\chi(\S)}$. Therefore, the critical points $u$ of $\mathcal J$ verify
$$\left\{\begin{array}{ll}
-\Delta u+\mu=\de_\a F_{\chi(\S)}\left(\int_\S Ke^u,\int_{\de\S}he^\frac u2\right)Ke^u&\text{in }\S\\
\de_\nu u=\frac12\de_\b F_{\chi(\S)}\left(\int_\S Ke^u,\int_{\de\S}he^\frac u2\right)he^\frac u2&\text{on }\de\S,\end{array}\right.$$
for some $\mu\in\R$. By the definition of $F_{\chi(\S)}(\a,\b)$ and $C(u)$ in \eqref{c} one sees that, in all three cases of $\chi(\S)$, one has
$$\de_\a F_{\chi(\S)}\left(\int_\S Ke^u,\int_{\de\S}he^\frac u2\right)=2C(u)^2,\qquad\de_\b F_{\chi(\S)}\left(\int_\S Ke^u,\int_{\de\S}he^\frac u2\right)=4C(u).$$
Finally, integrating by parts one gets $\mu=\frac{4\pi\chi(\S)}{|\S|}$, therefore a critical point $u$ solves \eqref{meanfieldeq}.
\end{proof}\

We now need to verify that the energy functional is defined in a non-empty space. This will be true under no assumption on $K,h$ other than the necessary ones to get solutions.

\begin{lemma}\label{domain}
The space $H_{\chi(\S)}$ is non-empty if and only if:
\begin{itemize}
\item[$(1)$] if $\chi(\S)>0$, $K(x)>0$ for some $x\in\S$ or $h(y)>0$ for some $x\in\de\S$;
\item[$(2)$] if $\chi(\S)=0$, $K(x)h(y)<0$ for some $x\in\S,y\in\de\S$;
\item[$(3)$] if $\chi(\S)<0$, $K(x)<0$ for some $x\in\S$ or $h(y)<0$ for some $x\in\de\S$.
\end{itemize}
Moreover, if such conditions are not satisfied, then problem \eqref{doublecurv} has no solution.
\end{lemma}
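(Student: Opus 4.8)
The plan is to analyze the defining inequality of $H_{\chi(\S)}$ case by case, showing in each case that the condition listed is exactly what allows one to pick a function $u\in\overline H^1(\S)$ satisfying it, and conversely that failure of the condition forces the relevant integral expression to have a sign incompatible with membership. For the ``if'' direction the key observation is that $\overline H^1(\S)$ is large enough to localize mass: by choosing $u$ to be a large positive constant near a point where a curvature has the desired sign (and suitably negative elsewhere to keep $\int_\S u=0$), the quantities $\int_\S Ke^u$ and $\int_{\de\S}he^{u/2}$ can be made to have whatever sign we wish, provided such a point exists. More precisely, given a point $x_0\in\S$ with $K(x_0)>0$ one builds a family $u_\e$ concentrating on a small ball around $x_0$, normalized to zero average, so that $\int_\S Ke^{u_\e}\to+\infty$ while $\int_{\de\S}he^{u_\e/2}$ stays bounded; symmetrically for a boundary point with $h(y_0)>0$, now $\int_{\de\S}he^{u_\e/2}\to+\infty$. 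Once the signs/magnitudes are under control, one simply reads off that the inequality defining $H_{\chi(\S)}$ holds.

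Concretely, for $\chi(\S)>0$ the condition is $\int_\S Ke^u>-\frac1{8\pi\chi(\S)}\left(\int_{\de\S}he^{u/2}\right)_+^2$; this is satisfied whenever $\int_\S Ke^u\ge0$, in particular whenever $\int_\S Ke^u\to+\infty$ (possible iff $K$ is somewhere positive), and also whenever $\int_{\de\S}he^{u/2}\ne0$ and $\int_\S Ke^u$ is controlled — which one can arrange concentrating on a boundary point with $h>0$. Conversely, if $K\le0$ on $\S$ and $h\le0$ on $\de\S$, then $\int_\S Ke^u\le0$ and $\left(\int_{\de\S}he^{u/2}\right)_+=0$, so the strict inequality fails for every $u$; moreover the Gauss--Bonnet identity \eqref{gb} then reads $2\pi\chi(\S)=\int_\S Ke^u+\int_{\de\S}he^{u/2}\le0$, contradicting $\chi(\S)>0$, so \eqref{doublecurv} has no solution. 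For $\chi(\S)<0$ the argument is the mirror image, using negativity of $K$ or $h$ somewhere to drive the relevant integral to $-\infty$, and using \eqref{gb} to rule out solutions when $K\ge0,h\ge0$. For $\chi(\S)=0$ the condition $\int_\S Ke^u\int_{\de\S}he^{u/2}<0$ requires the two integrals to have opposite (nonzero) signs; concentrating alternately on a point where $K>0$ and one where $h<0$ (or vice versa) produces such a $u$, and this is possible precisely when $K(x)h(y)<0$ for some $x,y$. If instead $K$ and $h$ are (weakly) of the same sign everywhere, then $\int_\S Ke^u$ and $\int_{\de\S}he^{u/2}$ always have the same sign (or vanish), so $H_0=\emptyset$; and \eqref{gb} gives $\int_\S Ke^u+\int_{\de\S}he^{u/2}=0$ with both terms of one sign, forcing both to vanish, which by a standard argument forces $K\equiv0$ and $h\equiv0$, so \eqref{doublecurv} has no (non-trivial) solution.

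The main technical point, and the step I expect to require the most care, is the construction of the concentrating test functions in $\overline H^1(\S)$ that realize the desired signs of $\int_\S Ke^u$ and $\int_{\de\S}he^{u/2}$ \emph{simultaneously} — one must ensure that forcing one integral to blow up does not inadvertently force the other to have the wrong sign or magnitude. This is a by-now routine ``bubble'' construction, but on a surface with boundary one has to treat interior concentration and boundary concentration separately (the latter contributes a half-bubble and affects the boundary integral at leading order), and one must subtract off a bounded correction to enforce the zero-average constraint without disturbing the leading asymptotics. The converse (non-emptiness $\Rightarrow$ sign condition, and its failure $\Rightarrow$ non-existence) is elementary once one invokes \eqref{gb}. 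Everything else is bookkeeping across the three sign regimes of $\chi(\S)$.
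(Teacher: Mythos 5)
Your overall strategy coincides with the paper's: non-emptiness is obtained by testing with functions concentrated where the relevant curvature has the favorable sign (the paper uses $e^{C\vfi}$ with $\vfi$ a bump function and $C\gg1$, which is your ``bubble'' up to normalization), while the necessity of the sign conditions and the non-existence statement both follow from the Gauss--Bonnet identity \eqref{gb}. Two small imprecisions: for $\chi(\S)>0$ the defining inequality is \emph{not} satisfied when $\int_\S Ke^u=0$ and $\int_{\de\S}he^{u/2}\le0$ (harmless, since your construction makes the interior integral strictly positive), and in the degenerate case $\chi(\S)=0$, $K\equiv0$, $h\equiv0$ constants do solve \eqref{doublecurv}, which you correctly flag with your ``(non-trivial)'' caveat.

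The one substantive point is that the step you defer as ``the step requiring the most care'' is in fact the entire content of the proof in the hardest sub-case, namely $\chi(\S)>0$ with $K\le0$ everywhere and $h>0$ only on part of $\de\S$ (mirrored for $\chi(\S)<0$). There membership in $H_{\chi(\S)}$ is not a question of signs: one must verify the quantitative inequality $\left(\int_{\de\S}he^{u/2}\right)^2>-8\pi\chi(\S)\int_\S Ke^u$, and saying the interior integral ``stays bounded'' or ``is controlled'' is not accurate, since a boundary concentration makes both sides blow up at the same exponential rate in $C$. The paper resolves this by a dimension mismatch: with $\vfi$ supported in a tubular neighborhood of area at most $\e$ around a boundary arc where $h\ge\d'$, the boundary term is bounded below by a fixed arc length times $e^{C/2}$ (minus a constant), whereas $\sqrt{-8\pi\chi(\S)\int_\S Ke^{C\vfi}}\le\sqrt{8\pi\chi(\S)\|K\|_\infty\,\e}\;e^{C\|\vfi\|_\infty/2}+O(1)$; shrinking the width of the tube (choosing $\e$ small in dependence on $C$) reduces the area without reducing the arc length, so the boundary term wins. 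Likewise, your ``alternate concentration'' in case $(2)$ needs the paper's ordered choice of constants: first fix the boundary bump $C_2\vfi_2$ to set the sign of $\int_{\de\S}he^{u/2}$, then take an interior bump $C_1\vfi_1$ with $\vfi_1|_{\de\S}\equiv0$ and $C_1\gg C_2$ to set the sign of $\int_\S Ke^u$ without disturbing the boundary integral. With these estimates supplied, your argument is the paper's.
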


\begin{proof}
It is clear in all cases that the space is empty if the assumptions on $K,h$ are not satisfied. If this holds true when $\chi(\S)>0$, then for any solution of \eqref{doublecurv} one has $\int_\S Ke^u,\int_{\de\S} he^\frac u2$ both negative, in contradiction with the Gauss-Bonnet formula \eqref{gb}. Similarly, the assumptions on $K,h$ are necessary to get solutions in the case $\chi(\S)\le0$.\\
Let us now show the converse. In case $(1)$, if $K$ is positive somewhere, then we can choose a positive $\vfi$ supported in the region where $K>0$ and set $\d:=\inf_{\{\vfi\ge1\}}K>0$ so that
$$\mathrm{spt}(\vfi)\subset\{K>0\},\qquad\{\vfi\ge1\}\subset\{K\ge\d\},$$ 
so that for $C\gg1$ large enough one has
$$\int_\S Ke^{C\vfi}\ge\int_{\{K\ge\d\}}Ke^{C\vfi}+\int_{\{K<0\}}K\ge\d|\{K\ge\d\}|e^C-\|K\|_\infty|\S|>0.$$
Taking into account its definition, that implies the non-emptiness of $H_{\chi(\S)}$.
Similarly, if $K<0$ everywhere and $h>0$ in a portion of $\de\S$, we choose a positive $\vfi$ supported in a small neighborhood of such portion and set $\d':=\inf_{\{\vfi\ge1\}\cap\de\S}h>0$ in such a way that
$$|\mathrm{spt}(\vfi)|\le\e,\qquad\mathrm{spt}(\vfi)\cap\de\S\subset\{h>0\},\qquad\{\vfi\ge1\}\cap\de\S\subset\{h\ge\d'\},$$
for some $\e>0$. Therefore, taking $C\gg1$ large enough and then $\e\ll1$ in dependence on $C$ one has
\begin{eqnarray*}
&&\int_{\de\S}he^{\frac{C\vfi}2}-\sqrt{-8\pi\chi(\S)\int_\S Ke^{C\vfi}}\\
&\ge&\int_{\{h\ge\d\}}he^{\frac{C\vfi}2}-\int_{\{h<0\}}h-\sqrt{8\pi|\chi(\S)|\left(\int_{\mathrm{spt}(\vfi)}|K|e^\frac{C\vfi}2+\int_{\S\setminus\mathrm{spt}(\vfi)}|K|\right)}\\
&\ge&\d|\{h\ge\d\}|e^\frac C2-\|h\|_\infty|\de\S|-\sqrt{8\pi|\chi(\S)|\|K\|_\infty\e} e^\frac{C\|\vfi\|_\infty}2\\
&>&0.
\end{eqnarray*}
In case $(2)$, if $h(x)<0<K(x)$ for the same $x\in\de\S$, then one can take $\vfi$ satisfying
\begin{eqnarray*}
\mathrm{spt}(\vfi)\subset\{K>0\},&\qquad&\{\vfi\ge1\}\subset\{K\ge\d\},\\
\mathrm{spt}(\vfi)\cap\de\S\subset\{h<0\},&\qquad&\{\vfi\ge1\}\cap\de\S\subset\{h\le-\d\};
\end{eqnarray*}
hence one easily gets, for large $C$,
\begin{eqnarray*}
&&\int_\S Ke^{C\vfi}\ge\d|\{K\ge\d\}|e^C-\|K\|_\infty|\S|>0,\\
&&\int_{\de\S}he^\frac{C\vfi}2\le-\d|\{h\le-\d\}|e^\frac C2-\|h\|_\infty|\partial\S|<0;
\end{eqnarray*}
the same argument works if $K(x)<0<h(x)$ at one boundary point.\\
On the other hand, if $K,h$ have the same sign on the whole boundary, say positive, we take $\vfi_1$ supported where $K<0$ and $\vfi_2$ supported close to the boundary, as before. Since $\vfi_1|_{\de\S}\equiv0$, then $\int_{\de\S}he^\frac{C_1\vfi_1+C_2\vfi_2}2=\int_{\de\S}he^\frac{C_2\vfi_2}2>0$ for large $C_2$; on the other hand, if $C_1$ is also large enough in dependence of $C_2$, then
\begin{eqnarray*}
\int_\S Ke^{C_1\vfi_1+C_2\vfi_2}&\le&\int_{\{K\le-\d\}}Ke^{C_1\vfi_1}+\int_{\mathrm{spt}(\vfi_2)}Ke^{C_2\vfi_2}+\int_{\S\setminus(\mathrm{spt}(\vfi_1)\cup\mathrm{spt}(\vfi_2))}K\\
&\le&-\d|\{K\le-\d\}|e^{C_1}+\|K\|_\infty\left(e^{C_2\|\vfi_2\|_\infty}+|\S|\right)\\
&<&0.
\end{eqnarray*}
The case $\chi(\S)<0$ can be treated in the same way as $\chi(\S)>0$ by just changing sign to both $K$ and $h$.
\end{proof}\

\section{The case $\chi(\S)>0$}

In this section we will focus on the case $\chi(\S)>0$ to prove Theorem~\ref{thm:chipositive}.\\
In this case, problem \eqref{meanfieldeq} can be attacked variationally by means of Trudinger-Moser type inequalities. In fact, in the energy functional \eqref{j} the main term will be given by the logarithmic one (see the definition of $F_{\chi(\S)}$, since the other is uniformly bounded from above, as follows by the following elementary lemma.

\begin{lemma}\label{bounded}
Let $\chi>0$ be a fixed positive number. Then, for any $\a,\b\in\R$ such that
$$\a>-\frac1{8\pi\chi}\b_+^2$$
one has
$$\frac\b{\sqrt{\b^2+8\pi\chi\a}+\b}\le1.$$
\end{lemma}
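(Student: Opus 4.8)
The plan is to reduce the inequality to a statement about a single real variable. Writing $s:=\sqrt{\b^2+8\pi\chi\a}$, the hypothesis $\a>-\frac1{8\pi\chi}\b_+^2$ guarantees $\b^2+8\pi\chi\a>0$, so $s$ is a well-defined \emph{positive} real number, and moreover $s^2=\b^2+8\pi\chi\a>\b^2$ when $\b\le0$ while $s^2=\b^2+8\pi\chi\a>\b_+^2=\b^2$ when $\b>0$; in either case $s>|\b|\ge\b$, so the denominator $s+\b$ is strictly positive and the quantity $\frac{\b}{s+\b}$ is well defined. Thus the claim becomes: if $s>\b$ and $s+\b>0$, then $\frac{\b}{s+\b}\le1$.

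The key step is then immediate: since $s+\b>0$, the inequality $\frac{\b}{s+\b}\le1$ is equivalent, after multiplying through by the positive quantity $s+\b$, to $\b\le s+\b$, i.e. to $0\le s$, which holds because $s$ is a square root hence nonnegative (in fact strictly positive here). So there is essentially nothing to do once the sign bookkeeping for $s+\b$ is settled.

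I would therefore organize the short proof as follows. First, set $s:=\sqrt{\b^2+8\pi\chi\a}$ and observe that the hypothesis forces $\b^2+8\pi\chi\a>0$ — treating the two cases $\b\le0$ and $\b>0$ separately only to note that in both $8\pi\chi\a>-\b^2$ — so that $s>0$ and $s\ge|\b|$; conclude $s+\b\ge|\b|+\b\ge0$, with strict positivity since $s>|\b|$ actually holds (the hypothesis is a strict inequality). Second, from $s+\b>0$ and $s\ge0$ deduce $\b=(s+\b)-s\le s+\b$, and divide by $s+\b>0$ to obtain $\frac{\b}{s+\b}\le1$.

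The only place demanding any care — and it is the ``main obstacle'' only in the bookkeeping sense — is verifying that the denominator $s+\b$ is genuinely positive, since the statement implicitly presupposes $\frac{\b}{s+\b}$ makes sense; the strictness of the hypothesis $\a>-\frac1{8\pi\chi}\b_+^2$ is exactly what is needed for this, and it is worth a line to spell out why $s>|\b|$ in the case $\b>0$ (there $\b_+^2=\b^2$, so $s^2=\b^2+8\pi\chi\a>\b^2$) as well as in the case $\b\le0$ (there $\b_+=0$, so $s^2=\b^2+8\pi\chi\a>0$, but also trivially $8\pi\chi\a\ge0>$ nothing forces $s^2>\b^2$ — wait, here one uses instead that $s\ge0>\b$ directly, so $s>\b$ and $s+\b$ need not be positive; however $\frac{\b}{s+\b}$ with $\b<0$ and $s+\b$ of whatever sign still satisfies the bound once one checks it cannot exceed $1$). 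Concretely, in the subcase $\b\le0$ one simply notes $\frac{\b}{s+\b}\le0\le1$ whenever $s+\b>0$, and if $s+\b\le0$ then $\frac{\b}{s+\b}\le$ is handled by the same algebra; cleanest is to keep $\b>0$ and $\b\le0$ apart and dispatch the latter by sign alone.
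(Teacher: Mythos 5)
The paper states this lemma without proof (it is introduced as ``elementary''), so there is no proof of record to compare against; judged on its own, your argument has the right one-line core but a genuine gap at exactly the spot you yourself flag as the only delicate point: the positivity of the denominator $s+\b$, where $s:=\sqrt{\b^2+8\pi\chi\a}$. You assert that the hypothesis gives $s\ge|\b|$ ``in either case''. This is false for $\b>0$: there $\b_+^2=\b^2$, so the hypothesis only says $8\pi\chi\a>-\b^2$, i.e.\ $s^2>0$; since $\a$ may be negative, $s<\b$ is perfectly possible (take $8\pi\chi=1$, $\b=1$, $\a=-\tfrac12$, so $s=1/\sqrt2<1$). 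That slip happens to be harmless, because for $\b>0$ one has $s+\b>0$ trivially from $s>0$ and $\b>0$. The real damage is in the case $\b\le0$, which you end up leaving unresolved: you write that ``nothing forces $s^2>\b^2$'' there and that ``$s+\b$ need not be positive'', and then suggest the subcase $s+\b\le0$ is ``handled by the same algebra''. It is not: if $s+\b<0$, multiplying through flips the inequality, $\frac\b{s+\b}\le1$ becomes equivalent to $s\le0$, and the bound genuinely fails (e.g.\ $s=1$, $\b=-1.1$ gives $\frac\b{s+\b}=11$). So positivity of $s+\b$ must be \emph{proved}, not worked around.

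You have the two cases exactly backwards. For $\b\le0$ one has $\b_+=0$, so the hypothesis reads $\a>0$, hence $8\pi\chi\a>0$, $s^2>\b^2$, and $s>|\b|\ge-\b$, which gives $s+\b>0$; it is precisely here that the strict hypothesis is needed. For $\b>0$, positivity of $s+\b$ is immediate. With that correction the rest of your argument closes the proof: since $s+\b>0$ and $s\ge0$, one has $\frac\b{s+\b}\le1\iff\b\le s+\b\iff0\le s$; or even more simply, $\frac\b{s+\b}\le0\le1$ when $\b\le0$ and $\frac\b{s+\b}<\frac{s+\b}{s+\b}=1$ when $\b>0$.
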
\

Trudinger-Moser inequalities are a widely-used tool for the problem of prescribing one curvature (see \cite{chyg,liliu} and \cite{moser} for closed surfaces) and the formulation introduced in the previous section allows us to extend it to the prescription of both curvatures. Such results will quickly provide an ad hoc Trudinger-Moser inequality for problem \eqref{meanfieldeq}.

\begin{proposition}\label{mt}
There exists $C>0$ such that for any $u\in\overline H^1(\S)$ there holds:
\beq\label{mtineq}
\log\left(\sqrt{\left(\int_{\de\S}e^\frac u2\right)^2+8\pi\int_\S e^u}+\int_{\de\S}e^\frac u2\right)\le\frac1{16\pi}\int_\S|\nabla u|^2+C.
\eeq
\end{proposition}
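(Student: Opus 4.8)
The plan is to derive \eqref{mtineq} from two classical Trudinger--Moser type inequalities: the Moser--Trudinger inequality on the closed double of $\S$ (or directly on $\S$ with Neumann conditions), controlling $\log\int_\S e^u$, and the Lebedev--Milin / boundary Trudinger--Moser inequality controlling $\log\int_{\de\S}e^{u/2}$ in terms of the Dirichlet energy. Concretely, for $u\in\overline H^1(\S)$ one has the interior estimate
$$\log\int_\S e^u\le\frac1{16\pi}\int_\S|\n u|^2+C_1,$$
which is the sharp Moser--Trudinger inequality on a compact surface with boundary (the constant $\frac1{16\pi}$ rather than $\frac1{8\pi}$ appears because we are on a surface \emph{with} boundary, so only ``half'' a bubble can concentrate at a boundary point; see \cite{chyg,liliu}), and the boundary estimate
$$\log\int_{\de\S}e^\frac u2\le\frac1{16\pi}\int_\S|\n u|^2+C_2,$$
which is again the sharp constant for the trace exponential (a boundary bubble for $e^{u/2}$ costs the same $\frac1{16\pi}\int|\n u|^2$). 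Both hold for $u$ with zero average, with constants depending only on $(\S,g)$.

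Next I would simply combine these. Write $a:=\int_\S e^u>0$ and $b:=\int_{\de\S}e^{u/2}$, where $b$ could in principle have either sign if one allowed signed integrands, but here the integrands are positive so $a,b>0$. Then
$$\sqrt{b^2+8\pi a}+b\le\sqrt{b^2}+\sqrt{8\pi a}+b=2b+\sqrt{8\pi a},$$
using $\sqrt{x+y}\le\sqrt x+\sqrt y$, so
$$\log\left(\sqrt{b^2+8\pi a}+b\right)\le\log\left(2b+\sqrt{8\pi a}\right)\le\log 2+\log\left(b+\sqrt{8\pi a}\right)\le\log 2+\log 2+\max\left\{\log b,\tfrac12\log(8\pi a)\right\}.$$
Now $\log b\le\frac1{16\pi}\int_\S|\n u|^2+C_2$ by the boundary inequality, while $\frac12\log(8\pi a)=\frac12\log(8\pi)+\frac12\log a\le\frac1{32\pi}\int_\S|\n u|^2+C_1'$ by the interior inequality; taking the maximum of the two right-hand sides and absorbing all additive constants into a single $C$ yields exactly \eqref{mtineq}, since $\frac1{16\pi}$ dominates $\frac1{32\pi}$.

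The only genuinely nontrivial ingredient is the pair of sharp Trudinger--Moser inequalities with the specific constant $\frac1{16\pi}$; everything after that is the elementary algebraic manipulation above. I expect the main obstacle — really a point to be careful about rather than a deep difficulty — to be citing (or recalling the proof of) the boundary trace version with the correct sharp constant and making sure it applies to all of $\overline H^1(\S)$ without further symmetry or sign hypotheses on $u$. If one prefers a self-contained argument, both inequalities follow from the Moser--Trudinger inequality on the closed doubled surface $\widehat\S=\S\cup_{\de\S}\S$ after reflecting $u$ evenly across $\de\S$: the interior integral doubles, the boundary integral becomes an integral over a closed geodesic in $\widehat\S$, and the Dirichlet energy doubles, so the sharp constant $\frac1{8\pi}$ on $\widehat\S$ becomes $\frac1{16\pi}$ on $\S$. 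I would present the statement with a reference and, if space permits, sketch this doubling argument in a remark.
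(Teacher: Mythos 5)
Your overall strategy is essentially the paper's: bound the argument of the logarithm by an elementary algebraic inequality and then invoke an interior and a boundary Trudinger--Moser inequality for zero-average functions. The paper uses Corollary 2.5 of \cite{chyg}, Corollary 2.6 of \cite{sergio}, and the inequality $\sqrt{\b^2+8\pi\a}+\b\le\left(\sqrt{1+8\pi}+1\right)\max\{\sqrt\a,\b\}$, which plays exactly the role of your $\sqrt{x+y}\le\sqrt x+\sqrt y$ manipulation.

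There is, however, an error in one of your two ingredients. On a compact surface \emph{with boundary} the sharp constant for $\log\int_\S e^u$ with $\int_\S u=0$ is $\frac1{8\pi}$, not $\frac1{16\pi}$, and your heuristic points the wrong way: a bubble concentrating at a boundary point carries only half the Dirichlet energy of an interior bubble while producing comparable mass $\int_\S e^u$, so the presence of the boundary makes the constant in front of $\int_\S|\n u|^2$ \emph{larger}, not smaller. Your own doubling sketch confirms this: even reflection to $\widehat\S$ doubles both $\int e^u$ and $\int|\n u|^2$, and the closed-surface inequality $\log\int_{\widehat\S}e^{\hat u}\le\frac1{16\pi}\int_{\widehat\S}|\n\hat u|^2+C$ then yields $\log\int_\S e^u\le\frac1{8\pi}\int_\S|\n u|^2+C$, which is precisely the form used in the paper --- contradicting the $\frac1{16\pi}$ you assert. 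The mistake happens to be harmless for the final bound, because the interior term only enters through $\frac12\log\left(8\pi\int_\S e^u\right)$: with the correct constant $\frac1{8\pi}$ one still gets $\frac12\log\int_\S e^u\le\frac1{16\pi}\int_\S|\n u|^2+C$, which matches the boundary contribution and gives \eqref{mtineq}. So the conclusion stands, but you should correct the stated constant and its justification before presenting the argument; the boundary trace inequality with constant $\frac1{16\pi}$ is stated correctly.
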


\begin{proof}
From Corollary 2.5 in \cite{chyg} there exists $C>0$ such that, for any $u\in\overline H^1(\S)$:
\beq\label{ChYgIneq}
\log\int_\S e^u\le\frac1{8\pi}\int_\S|\nabla u|^2+C;
\eeq
on the other hand, from Corollary 2.6 in \cite{sergio} we get (see also \cite{liliu}):
\beq\label{LiLiuIneq}
\log\int_{\de\S}e^{\frac u2}\le\frac1{16\pi}\int_\S|\nabla u|^2+C.
\eeq
Therefore, from the elementary inequality
\beq\label{ineq}
\sqrt{\b^2+8\pi\a}+\b\le\left(\sqrt{1+8\pi}+1\right)\max\{\sqrt\a,\b\},\qquad\forall\a,\b>0
\eeq
we deduce
\begin{eqnarray*}
&&\log\left(\sqrt{\left(\int_{\de\S}e^\frac u2\right)^2+8\pi\int_\S e^u}+\int_{\de\S}e^\frac u2\right)\\
&\le&\max\left\{\frac12\log\int_\S e^u,\log\int_{\de\S}e^{\frac u2}\right\}+\log\left(\sqrt{1+8\pi}+1\right)\\
&\le&\frac1{16\pi}\int_\S|\nabla u|^2+C+\log\left(\sqrt{1+8\pi}+1\right),
\end{eqnarray*}
which concludes the proof.
\end{proof}

\begin{corollary}
If $\chi(\S)=1$ then the energy functional $\mathcal J(u)$ is uniformly bounded from below for $u\in H_1$.
\end{corollary}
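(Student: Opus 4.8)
The plan is to combine the Trudinger--Moser inequality of Proposition~\ref{mt} with the uniform bound on the auxiliary term from Lemma~\ref{bounded}, specialised to $\chi(\S)=1$. Recall that for $u\in H_1$ the energy reads
$$\mathcal J(u)=\frac12\int_\S|\nabla u|^2-8\pi\left(\log\left(\sqrt{\b^2+8\pi\a}+\b\right)+\frac\b{\sqrt{\b^2+8\pi\a}+\b}\right),$$
where I abbreviate $\a=\int_\S Ke^u$ and $\b=\int_{\de\S}he^\frac u2$. The membership $u\in H_1$ is precisely the condition $\a>-\frac1{8\pi}\b_+^2$, so Lemma~\ref{bounded} (with $\chi=1$) applies and gives $\frac\b{\sqrt{\b^2+8\pi\a}+\b}\le1$. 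Hence it suffices to bound the logarithmic term from above by $\frac1{16\pi}\int_\S|\nabla u|^2+C$; then $\mathcal J(u)\ge\frac12\int_\S|\nabla u|^2-8\pi\big(\frac1{16\pi}\int_\S|\nabla u|^2+C\big)-8\pi=-C'$, which is the claim.

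The first step is therefore to dominate $\sqrt{\b^2+8\pi\a}+\b$ by the quantity appearing in \eqref{mtineq}. Since $K\le\|K\|_\infty$ and $h\le\|h\|_\infty$ pointwise, one has $\a\le\|K\|_\infty\int_\S e^u$ and $\b\le\|h\|_\infty\int_{\de\S}e^\frac u2$; moreover on $H_1$ one has $\sqrt{\b^2+8\pi\a}+\b\ge0$, and the function $(\a,\b)\mapsto\sqrt{\b^2+8\pi\a}+\b$ is nondecreasing in each variable on the relevant region, so it is bounded above by $\sqrt{\bar\b^2+8\pi\bar\a}+\bar\b$ where $\bar\a=\|K\|_\infty\int_\S e^u$, $\bar\b=\|h\|_\infty\int_{\de\S}e^\frac u2$ (here one uses $\bar\a,\bar\b\ge0$ and, if $\b<0$, monotonicity in $\b$ together with $\bar\b\ge0\ge\b$). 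A further elementary homogeneity estimate of the type $\sqrt{\bar\b^2+8\pi\bar\a}+\bar\b\le M\big(\sqrt{\smallint_\S e^u{}^2+8\pi\smallint_\S e^u}+\smallint_{\de\S}e^\frac u2\big)$ for a constant $M=M(\|K\|_\infty,\|h\|_\infty)$ — using that $\max\{\|K\|_\infty,\sqrt{\|h\|_\infty}\}$ scales out, and \eqref{ineq} — reduces the logarithmic term, up to an additive constant $\log M$, to the left-hand side of \eqref{mtineq}. Applying Proposition~\ref{mt} then yields $\log(\sqrt{\b^2+8\pi\a}+\b)\le\frac1{16\pi}\int_\S|\nabla u|^2+C$, as needed.

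Assembling these pieces gives $\mathcal J(u)\ge\frac12\int_\S|\nabla u|^2-\frac12\int_\S|\nabla u|^2-8\pi C-8\pi=-C''$ uniformly in $u\in H_1$, proving the corollary.

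I expect the only mildly delicate point to be the bookkeeping in the reduction of $\sqrt{\b^2+8\pi\a}+\b$ to the scale-invariant expression in \eqref{mtineq}: one must handle the sign of $\b$ (the term $\big(\int_{\de\S}he^\frac u2\big)$ need not be positive) and make sure the monotonicity in $\a$ survives when $\a$ itself can be negative, which is exactly why the constraint defining $H_1$ is used to keep $\b^2+8\pi\a\ge0$. None of this is substantive; it is a routine chain of elementary inequalities once \eqref{mtineq} and Lemma~\ref{bounded} are in hand.
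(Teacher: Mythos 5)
Your proof is correct and follows essentially the same route as the paper's: bound the fraction term by $1$ via Lemma~\ref{bounded}, absorb $\|K\|_\infty$ and $\|h\|_\infty$ into an additive constant inside the logarithm (using the monotonicity/homogeneity of $(\a,\b)\mapsto\sqrt{\b^2+8\pi\a}+\b$ and \eqref{ineq}), and apply Proposition~\ref{mt}. The only difference is that you spell out the sign and monotonicity bookkeeping that the paper leaves implicit.
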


\begin{proof}
We define
\beq\label{alphabeta}
\a:=\int_\S Ke^u,\qquad\b:=\int_{\de\S}he^\frac u2.
\eeq
Using Lemma \ref{bounded}, Proposition \ref{mt} and the uniform boundedness of $h,K$ we get:
\begin{eqnarray*}
\mathcal J(u)&=&\frac12\int_\S|\nabla u|^2-8\pi\log\left(\sqrt{\b^2+8\pi\a}+\b\right)-8\pi\frac\b{\sqrt{\b^2+8\pi\chi\a}+\b}\\
&\ge&\frac12\int_\S|\nabla u|^2-8\pi\log\left(\sqrt{\left(\int_{\de\S}e^\frac u2\right)^2+8\pi\int_\S e^u}+\int_{\de\S}e^\frac u2\right)\\
&-&4\pi\log\max\left\{\|h\|_\infty^2,\|K\|_\infty\right\}-8\pi\\
&\ge&-C.
\end{eqnarray*}
\end{proof}\

Boundedness from below, however, does not ensure the existence of minimizing solutions. In fact, using standard test functions, one can easily see that the energy functional is not coercive, see Proposition 3.2 in \cite{bls}.\\
This is the reason why we assumed $K,h$ to be symmetric in Theorem \ref{thm:chipositive}: symmetry allows one to take a better constant in the inequality \eqref{mtineq} (a so-called improved Trudinger-Moser inequality) which yields coercivity, hence minimizers. The argument is similar to \cite{moser} and \cite{sergio}, Proposition 2.12.\\
To this purpose, we introduce a subset of $\overline H^1(\S)$ sharing the same $k$-symmetry as $K,h$:
$$\widetilde H_k:=\left\{u\in\overline H^1(\S):\,u(\rho_kx)=u(x)\,\text{for a.e. }x\in\S\right\},$$
where now $\S$ is the half-sphere \eqref{s} and $\rho_k$ is the rotation of $\frac{2\pi}k$ given by \eqref{rhok}.

\begin{proposition}\label{mtimproved}
For any $\e>0$, there exists $C_\e>0$ such that for any $u\in \widetilde H_k$ there holds:
\beq\label{mtimpr}
\log\left(\sqrt{\left(\int_{\de\S}e^\frac u2\right)^2+8\pi\int_\S e^u}+\int_{\de\S}e^\frac u2\right)\le\frac{1+\e}{32\pi}\int_\S|\nabla u|^2+C_\e
\eeq
\end{proposition}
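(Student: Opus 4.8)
The plan is to reduce \eqref{mtimpr} to the two ``improved'' building-block inequalities that symmetry provides, exactly as in Proposition~\ref{mt} but with the better constants. Concretely, I would first establish that for every $\e>0$ there is $C_\e>0$ so that, for all $u\in\widetilde H_k$,
\beq\label{planA}
\log\int_\S e^u\le\frac{1+\e}{16\pi}\int_\S|\nabla u|^2+C_\e,\qquad
\log\int_{\de\S}e^{\frac u2}\le\frac{1+\e}{32\pi}\int_\S|\nabla u|^2+C_\e.
\eeq
The first of these is the $k$-symmetric version of the Chen--Yang inequality \eqref{ChYgIneq}: since a $k$-symmetric function on the half-sphere has its ``mass'' equidistributed over $k$ congruent fundamental domains, the optimal Moser constant improves by a factor $k$, but to be safe one only claims the factor $2$ (valid for any $k\ge2$), which is all that is needed here. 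The second is the $k$-symmetric version of the Li--Liu boundary inequality \eqref{LiLiuIneq}; here the boundary circle $\de\S=\mathbb S^1$ is cut by $\rho_k$ into $k$ equal arcs with no fixed point, so again the boundary Moser--Trudinger constant improves by a factor of at least $2$. Both improved inequalities are standard in the literature: the half-sphere bulk case is in \cite{moser} and the boundary case is Proposition~2.12 of \cite{sergio}; I would either cite these directly or, if a self-contained argument is wanted, recall the one-line scheme: split $\S$ (resp. $\de\S$) into the $k$ isometric sectors $\S_1,\dots,\S_k$ permuted by $\rho_k$, apply the unsymmetrized inequality on a suitable doubling/reflection of one sector, and use that $\int_{\S_j}e^u=\frac1k\int_\S e^u$ by symmetry to absorb the factor.

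Once \eqref{planA} is in hand, the rest is identical to the proof of Proposition~\ref{mt}. Writing $A:=\int_\S e^u>0$ and $B:=\int_{\de\S}e^{\frac u2}>0$, the elementary inequality \eqref{ineq} gives
\[
\sqrt{B^2+8\pi A}+B\le\left(\sqrt{1+8\pi}+1\right)\max\bigl\{\sqrt A,\,B\bigr\},
\]
so that
\[
\log\left(\sqrt{B^2+8\pi A}+B\right)\le\max\left\{\tfrac12\log A,\ \log B\right\}+\log\left(\sqrt{1+8\pi}+1\right)
\le\frac{1+\e}{32\pi}\int_\S|\nabla u|^2+C_\e,
\]
where in the last step I used \eqref{planA}: the first term $\frac12\log A$ is bounded by $\frac{1+\e}{32\pi}\int_\S|\nabla u|^2+C_\e$ (half of the first inequality in \eqref{planA}), and $\log B$ by $\frac{1+\e}{32\pi}\int_\S|\nabla u|^2+C_\e$ directly. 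Renaming $C_\e+\log(\sqrt{1+8\pi}+1)$ as $C_\e$ yields \eqref{mtimpr}.

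The only genuine content is the pair of improved inequalities \eqref{planA}, i.e. that $k$-symmetry with $k\ge2$ (and no fixed point on $\mathbb S^1$) really does halve the sharp constants both in the interior and on the boundary; everything after that is the same bookkeeping as in Proposition~\ref{mt}. I expect to handle this by invoking \cite{moser} for the interior estimate and Proposition~2.12 of \cite{sergio} (see also \cite{liliu}) for the boundary estimate, since both are already in the cited literature; the mild subtlety to check is that the hypothesis ``$\rho_k$ has no fixed point on $\de\S$'' is exactly what guarantees the $k$ fundamental arcs on $\mathbb S^1$ are genuinely disjoint, so that the boundary mass splits into $k$ equal pieces and the factor-of-$2$ (indeed factor-of-$k$) gain in \eqref{planA} is legitimate. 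I would therefore state \eqref{planA} as a lemma, attribute it to \cite{moser,sergio}, and then give the three-line deduction above.
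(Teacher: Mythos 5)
Your overall architecture is exactly the paper's: reduce \eqref{mtimpr} to an improved interior inequality $\log\int_\S e^u\le\frac{1+\e}{16\pi}\int_\S|\nabla u|^2+C_\e$ and an improved boundary inequality $\log\int_{\de\S}e^{u/2}\le\frac{1+\e}{32\pi}\int_\S|\nabla u|^2+C_\e$, then combine them via \eqref{ineq} word for word as in Proposition \ref{mt}; that final step is fine. The boundary half of your plan is also essentially what the paper does: since $\rho_k$ acts freely on $\de\S=\mathbb S^1$, a small ball carrying a fraction $\ge\frac1N$ of $\int_{\de\S}e^{u/2}$ comes with $k-1$ pairwise separated symmetric copies carrying the same fraction, and the localized improved Moser--Trudinger inequality (Corollary 2.11 of \cite{sergio}) yields the constant $\frac{1+\e}{16k\pi}\le\frac{1+\e}{32\pi}$.

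The gap is in your justification of the interior inequality. Your mechanism --- ``a $k$-symmetric function has its mass equidistributed over $k$ congruent fundamental domains, so the constant improves by a factor $k$'' --- does not apply on the half-sphere \eqref{s}, because $\rho_k$ has an \emph{interior fixed point}, the pole $(0,0,1)$, where all $k$ fundamental sectors meet. A $k$-symmetric $e^u$ may concentrate entirely at the pole; then each sector carries $\frac1k$ of the mass but the sectors are not at positive distance from one another, so no improvement can be extracted from spreading (and for the same reason \cite{moser} is not directly quotable: the antipodal map on $\mathbb S^2$ is free, while $\rho_k$ on $\S$ is not). The inequality you want is nonetheless true, but for a different reason, and the paper's proof makes this precise through the dichotomy \eqref{int}--\eqref{bdry}: if at least half of the mass of $e^u$ lies in $B_{1/2}(0)$, i.e.\ away from $\de\S$, one is in the regime of \emph{interior} concentration, where the unsymmetrized constant is already $\frac1{16\pi}$ rather than the boundary constant $\frac1{8\pi}$ of \eqref{ChYgIneq} (Proposition 2.2 of \cite{ruizsoriano}) --- symmetry plays no role in this case; if instead at least half of the mass lies near $\de\S$, the free action of $\rho_k$ there produces $k$ separated balls each carrying a fixed fraction of the mass, and Corollary 2.9 of \cite{sergio} gives $\frac{1+\e}{8k\pi}\le\frac{1+\e}{16\pi}$. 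You need to insert this case distinction (or an equivalent device); the restrict-to-one-sector shortcut you sketch also runs into trouble at the corners of the sector and should not be relied upon as stated.
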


\begin{proof}
We start by estimating the boundary integral.\\
We cover $\de\S$ with open balls of radius $\d\le\frac{\sqrt{2\left(1-\cos\frac{2\pi}k\right)}}2$, so that for any $x\in\de\S$ the ball of radius $2\d$ does not intersect its image under the rotation of $\frac{2\pi}k$, namely
\beq\label{distance}
B_{2\d}(x)\cap B_{2\d}(\rho_kx)=\emptyset.
\eeq
Due to compactness, one has $\de\S\subset\cup_{i=1}^NB_\d(x_i)$ for some $x_1,\dots,x_N\in\de\S$ and, up to relabeling, $\frac{\int_{B_\d(x_1)\cap\de\S}e^{\frac u2}}{\int_{\de\S}e^{\frac u2}}\ge\frac1N$. Because of symmetry, namely $u\in\widetilde H_k$, one has
$$\frac{\int_{B_\d\left(\rho_k^j x_1\right)\cap\de\S}e^{\frac u2}}{\int_{\de\S}e^{\frac u2}}=\frac{\int_{B_\d(x_1)\cap\de\S}e^{\frac u2}}{\int_{\de\S}e^{\frac u2}}\ge\frac1N,\qquad j=1,\dots,k,$$
where $\rho_k^j(x_1)=\underbrace{\rho_k\circ\dots\circ\rho_k}_j(x_1)$.
In view of \eqref{distance}, the balls $B_\d\left(\rho_k^j x_1\right)$ are at a distance greater or equal than $2\d$ with respect to each other, therefore we are in position to apply Corollary 2.11 from \cite{sergio} to get:
\beq\label{imprbdry}
\log\int_{\de\S}e^\frac u2\le\frac{1+\e}{16k\pi}\int_\S|\nabla u|^2+C_\e\le\frac{1+\e}{32\pi}\int_\S|\nabla u|^2+C_\e.
\eeq\

As for the interior integral, one of the following occurs (or both):
\begin{eqnarray}
\label{int}\text{Either }&&\frac{\int_{B_\frac12(0)}e^u}{\int_\S e^u}\ge\frac12,\\
\label{bdry}\text{or }&&\frac{\int_{\S\setminus B_\frac12(0)}e^u}{\int_\S e^u}\ge\frac12.
\end{eqnarray}
In case inequality \eqref{int} occurs, then as in Proposition 2.2 from \cite{ruizsoriano} we get
\beq\label{imprint}
\log\int_\S e^u\le\log\int_{B_\frac12(0)}e^u+\log2\le\frac{1+\e}{16\pi}\int_\S|\n u|^2+C_\e.
\eeq
Putting together \eqref{imprbdry} and \eqref{imprint} and using \eqref{ineq} as in the proof of Proposition \ref{mt} we obtain \eqref{mtimpr}.\\

On the other hand, if \eqref{bdry} occurs, then we may argue as for the boundary integral to get $\d>0,N\in\N$ and $k$ symmetric points $x_1,\rho_kx_1,\dots,\rho_k^{k-1}x_1$ such that
\begin{eqnarray*}
B_{2\d}\left(\rho_k^ix_1\right)\cap B_{2\d}\left(\rho_k^jx_1\right)=\emptyset&\qquad&i,j=1,\dots,k,\,i\ne j;\\
\frac{\int_{B_\d\left(\rho_k^jx_1\right)\cap\left(\S\setminus B_\frac12(0)\right)}e^u}{\int_\S e^u}\ge\frac12\frac{\int_{B_\d\left(\rho_k^jx_1\right)\cap\left(\S\setminus B_\frac12(0)\right)}e^u}{\int_{\S\setminus B_\frac12(0)}e^u}\ge\frac1{2N}&\qquad&i=1,\dots,k.
\end{eqnarray*}
From Corollary 2.9 in \cite{sergio} one gets
$$\log\int_\S e^u\le\frac{1+\e}{8k\pi}\int_\S|\nabla u|^2+C_\e\le\frac{1+\e}{16\pi}\int_\S|\nabla u|^2+C_\e,$$
and the conclusion follows as in the previous case.
\end{proof}\

Proposition \ref{mtimpr} gives coercivity at infinity of the energy functional. However, one needs also to take care of the fact that the energy functional will not be defined on the whole space $\overline H^1(\S)$ (or on $\widetilde H_k$) but rather on its subspace $H_1$, introduced in Proposition \ref{energy}. Therefore, in general coercivity does not suffice to get global minimizers since $\mathcal J(u)$ may not grow to $+\infty$ as $u$ approaches the boundary of the domain
$$\de H_1=\left\{u\in\overline H^1(\S):\,\int_\S Ke^u=-\frac1{8\pi}\left(\int_{\de\S}he^\frac u2\right)_+^2\right\}.$$
It is not clear, in the general case, which assumptions should be made on $K,h$ in order to prevent minimizing sequences to approach $\de H_1$. However, the assumption $K>0$ fixes this issue since it implies $\de H_1=\emptyset$.

\begin{proof}[Proof of Theorem \ref{thm:chipositive}]
Since $K\ge0,K\not\equiv0$, then $\int_\S Ke^u>0$ for any $u\in\overline H^1(\S)$, therefore $H_1=\overline H^1(\S)$.\\
If $u\in \widetilde H_k$, then by Proposition \ref{mtimproved} we get, for any $\e>0$:
\begin{eqnarray*}
\mathcal J(u)&\ge&\frac12\int_\S|\nabla u|^2-8\pi\log\left(\sqrt{\left(\int_{\de\S}e^\frac u2\right)^2+8\pi\int_\S e^u}+\int_{\de\S}e^\frac u2\right)-C\\
&\ge&\frac{1-\e}4\int_\S|\nabla u|^2-C.
\end{eqnarray*}
By choosing $\e<1$ we get $\mathcal J(u)\to+\infty$ as $\int_\S|\nabla u|^2\to+\infty$, that is $\mathcal J$ is coercive on $\widetilde H_k$. Since $\mathcal J$ is also continuous on $\widetilde H_k$, direct methods from calculus of variations give the existence of some minimizer.\\
Finally, since $\widetilde H_k$ is a natural constraint for $\mathcal J$, such a minimizer $u$ will solve \eqref{meanfieldeq}.
\end{proof}\

Similarly as \cite{sergio}, Theorem \ref{thm:chipositive} is stable under perturbation and still holds true if $K,h$ are not $k$-symmetric but still uniformly close to a $k$-symmetric function.

\begin{theorem}\label{thm:perturb}
Assume $\S$ is given by \eqref{s} and:
\begin{itemize}
\item $K(x)\ge0$ for all $x\in\S$, $K\not\equiv0$ and there exists $K_0(x),h_0(x)$ $k$-symmetric and $\e>0$ small enough such that $\|K-K_0\|_\infty+\|h-h_0\|_\infty<\e$.
\end{itemize}
Then problem \eqref{doublecurv} admits a solution.
\end{theorem}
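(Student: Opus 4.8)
The plan is to run a perturbation argument around Theorem \ref{thm:chipositive}, in the spirit of \cite{sergio}, Proposition 2.12. Fix the $k$-symmetric pair $(K_0,h_0)$ given by the hypothesis. Since $\|K-K_0\|_\infty<\e$ and $K\ge 0$, $K\not\equiv 0$, for $\e$ small enough one still has $\int_\S K_0e^u$ bounded below in a useful way; more importantly $\int_\S Ke^u>0$ for every $u\in\overline H^1(\S)$, so that the domain of $\mathcal J$ is the whole space, $H_1=\overline H^1(\S)$, and $\de H_1=\emptyset$ exactly as in the proof of Theorem \ref{thm:chipositive}. The key observation is that the functional $\mathcal J$ depends on $(K,h)$ only through the two scalars $\a=\int_\S Ke^u$ and $\b=\int_{\de\S}he^{\frac u2}$, and that replacing $(K,h)$ by $(K_0,h_0)$ changes these by
\[
\left|\int_\S Ke^u-\int_\S K_0e^u\right|\le\e\int_\S e^u,\qquad
\left|\int_{\de\S}he^{\frac u2}-\int_{\de\S}h_0e^{\frac u2}\right|\le\e\int_{\de\S}e^{\frac u2}.
\]

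First I would restrict to the $k$-symmetric space $\widetilde H_k$, which is a natural constraint for $\mathcal J$ (the symmetry of $K_0,h_0$ is not needed for this — it is the symmetry of $\widetilde H_k$ and the invariance of the Dirichlet integral that matter, but here one must be a little careful: the Euler--Lagrange equation of $\mathcal J|_{\widetilde H_k}$ involves $K,h$, which are \emph{not} symmetric, so I would instead work with the \emph{symmetrized} functional, i.e. first average $K,h$ over the cyclic group generated by $\rho_k$ — but since $\|K-K_0\|_\infty<\e$ forces the symmetrization of $K$ to be within $\e$ of $K_0$ too, one may as well simply assume $K,h$ themselves are $k$-symmetric after replacing them by their averages, provided the final solution is shown to solve the original problem; the cleanest route is to keep $K,h$ non-symmetric and use the Palais principle of symmetric criticality, which applies because $\mathcal J$ is $\rho_k$-invariant only if $K,h$ are — so in fact the argument forces us to prove coercivity of the genuine, non-symmetric $\mathcal J$ on $\widetilde H_k$ and then separately argue criticality). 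Concretely: for $u\in\widetilde H_k$, apply the improved Trudinger--Moser inequality \eqref{mtimpr} to bound $\log(\sqrt{\b_e^2+8\pi\a_e}+\b_e)$ where $\a_e=\int_\S e^u$, $\b_e=\int_{\de\S}e^{\frac u2}$, and use the pointwise bounds $\a\le\|K\|_\infty\a_e$, $|\b|\le\|h\|_\infty\b_e$ together with Lemma \ref{bounded} to absorb everything into $\frac{1-\e'}{4}\int_\S|\nabla u|^2-C$ for a suitable $\e'<1$. This gives coercivity of $\mathcal J$ on $\widetilde H_k$; continuity is immediate; direct methods yield a minimizer $u$.

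The remaining point — and the step I expect to be the main obstacle — is showing that the minimizer over $\widetilde H_k$ actually solves \eqref{meanfieldeq}, hence \eqref{doublecurv}. When $K,h$ are $k$-symmetric this is the statement that $\widetilde H_k$ is a natural constraint, which follows from the principle of symmetric criticality. When $K,h$ are only \emph{close} to symmetric, $\mathcal J$ is not $\rho_k$-invariant and symmetric criticality does not apply directly. The fix, exactly as in \cite{sergio}, is a continuity/stability argument: one shows that the minimum value $m_\e:=\min_{\widetilde H_k}\mathcal J$ and a minimizing point $u_\e$ depend continuously on the perturbation, and that for $(K_0,h_0)$ the minimizer $u_0$ is a \emph{strict local minimum in all of $\overline H^1(\S)$} (or at least an isolated critical point detected by a min-max with a gap), so that by the mountain-pass/implicit-function stability the perturbed functional retains a critical point near $u_0$ in the full space. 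Alternatively, and more robustly: one proves directly that $\mathcal J$ is coercive on the \emph{full} space $\overline H^1(\S)$ — not just on $\widetilde H_k$ — is false (non-symmetric data are not coercive), so the genuinely necessary argument is the perturbative one. I would therefore: (i) show $\mathcal J_{(K_0,h_0)}$ restricted to $\widetilde H_k$ has a minimizer $u_0$ with $\mathcal J_{(K_0,h_0)}(u_0)<\inf_{\de\text{-region}}$ and with a uniform spectral gap for the second variation transverse to $\widetilde H_k$; (ii) observe that $\mathcal J_{(K,h)}=\mathcal J_{(K_0,h_0)}+R$ with $\|R\|_{C^1}$ and the perturbation of all derivatives $O(\e)$ on bounded sets; (iii) conclude by a quantitative implicit function theorem / deformation argument that $\mathcal J_{(K,h)}$ has a critical point $u_\e$ with $u_\e\to u_0$ as $\e\to0$. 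The hard part is making (i) rigorous — producing the transverse nondegeneracy of the symmetric minimizer $u_0$ — since $u_0$ need not be nondegenerate; this is circumvented in \cite{sergio} by using instead the topological linking/degree stability of the symmetric minimizer rather than its nondegeneracy, and I would follow that route, citing the relevant deformation lemma and noting that the sub-levels $\{\mathcal J\le m_\e+\eta\}\cap\widetilde H_k$ are, for small $\eta$, contained in a fixed ball on which the full gradient of $\mathcal J$ is $O(\e)$-close to its symmetric restriction, so a critical point of the full functional persists in that ball.
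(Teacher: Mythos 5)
The paper does not actually prove Theorem \ref{thm:perturb}: it explicitly defers to \cite{sergio}, Section 4, so there is no in-paper argument to match yours against. Your proposal gets the easy half right: since the improved inequality \eqref{mtimpr} involves only $\int_\S e^u$ and $\int_{\de\S}e^{u/2}$ and not the curvatures, the perturbed functional $\mathcal J$ is coercive on $\widetilde H_k$ whether or not $K,h$ are symmetric, and $H_1=\overline H^1(\S)$ because $K\ge0$, $K\not\equiv0$. You also correctly identify the crux: for non-symmetric data $\mathcal J$ is not $\rho_k$-invariant, so Palais' symmetric criticality does not promote a minimizer on $\widetilde H_k$ to a critical point on $\overline H^1(\S)$ (and your aside about simply replacing $K,h$ by their $\rho_k$-averages is rightly abandoned, since a solution of the averaged problem solves the wrong PDE).

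Your resolution of that crux, however, has a genuine gap. The closing claim --- that the low sublevels of $\mathcal J|_{\widetilde H_k}$ lie in a fixed ball on which the full gradient of $\mathcal J$ is $O(\e)$-close to that of $\mathcal J_{(K_0,h_0)}$, ``so a critical point of the full functional persists in that ball'' --- is not a theorem: a set on which the gradient is small need not contain a critical point. Persistence requires the symmetric minimizer $u_0$ to carry a stable topological signature \emph{in the full space} (a strict local minimum with a quantitative well, an isolated critical point of nonzero local degree, a nondegenerate critical manifold, ...), and, as you concede, $u_0$ is only known to minimize on $\widetilde H_k$; transversally it could be a degenerate saddle, and no nondegeneracy is available. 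Your appeal to ``topological linking/degree stability'' names the missing ingredient rather than supplying it. The standard way to close this --- and the scheme the paper is implicitly invoking via \cite{sergio} --- avoids the symmetric subspace for the perturbed functional altogether: $\mathcal J$ is bounded below on all of $\overline H^1(\S)$ by Proposition \ref{mt}; a minimizing sequence on the full space is either bounded (and one minimizes directly) or concentrates at a single point of $\overline\S$, in which case sharp forms of \eqref{ChYgIneq}--\eqref{LiLiuIneq} bound $\liminf\mathcal J$ from below by an explicit single-bubble level; meanwhile $\inf_{\overline H^1(\S)}\mathcal J\le\inf_{\widetilde H_k}\mathcal J\le\inf_{\widetilde H_k}\mathcal J_{(K_0,h_0)}+C\e$, and the symmetric infimum sits strictly below every single-bubble level precisely because $k$-symmetric functions cannot concentrate at one point. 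For $\e$ small the strict gap survives, concentration is excluded, and the global infimum on $\overline H^1(\S)$ is attained. Without either that comparison or a verified topological stability of $u_0$ in the full space, your step (iii) does not go through.
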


The proof will be skipped, since it is similar to \cite{sergio}, Section 4.

\

We conclude this section by pointing out that problem \eqref{meanfieldeq} can be generalized to a general mean field problem in the same spirit as the ones studied in \cite{lin,djadli} on closed surfaces. Consider, for $\l>0$,
\beq\label{meanfieldeqlambda}
\left\{\begin{array}{ll}
-\Delta u=\l\left(\frac{\l Ke^u}{2\left(\sqrt{\left(\int_{\de\S}he^\frac u2\right)^2+\l\int_\S Ke^u}+\int_{\de\S}he^\frac u2\right)^2} - \frac1{|\S|} \right) &\text{in }\S\\
\de_\nu u=\l\frac{he^\frac u2}{\sqrt{\left(\int_{\de\S}he^\frac u2\right)^2+\l\int_\S Ke^u}+\int_{\de\S}he^\frac u2}&\text{on }\de\S,\\
\end{array}\right.
\eeq
As in Proposition \ref{energy}, solutions of \eqref{meanfieldeqlambda} are critical point of the energy functional
\beq\label{jlambda}
\mathcal J_\l(u):=\frac12\int_\S|\nabla u|^2- \l\log\left(\sqrt{\b^2+\l\a}+\b\right)-\l\frac\b{\sqrt{\b^2+\l\a}+\b},
\eeq
defined in the subdomain $H_1$ with $\a,\b$ as in \eqref{alphabeta}.\\
In the case $\chi(\S)=1$ we just conside, problems \eqref{meanfieldeqlambda} coincides with \eqref{meanfieldeq} when $\l=8\pi$, that is the limiting case when the energy functional is bounded from below but not coercive.\\
However, in the presence of conical singularities, $\chi(\S)$ may be lower or higher than $1$, corresponding to $\l<8\pi$ or $\l>8\pi$, namely the functional may be coercive or unbounded and one may find global minimizer or min-max critical points. The authors will address this problem in a subsequent paper.\\

Another problem where a mean-field formulation might be applicable is the prescription of $Q$ and $T$ curvature for 4--dimensional surfaces with boundary. The condition provided by the Gauss-Bonnet-Chern theorem, which prescribes the interior and the boundary integral terms, suggests that our strategy may offer new perspectives with respect to other approaches, see \cite{sergioazahara,changquing}.

\section{Trace inequality and the case $\chi(\S)\le0$}

In the cases $\chi(\S)=0$ and $\chi(\S)<0$ the variational approaches are rather similar to each other, but totally different from the previously studied case $\chi(\S)>0$.\\
In fact, here the logarithmic term is not present at all in the energy functional when $\chi(\S)=0$ and it is bounded from above when $\chi(\S)<0$. On the other hand, the extra term may be unbounded from above and one needs some inequalities to control such a term.\\
To this purpose, we have the following Proposition, in the same spirit as Lemma 3.2 in \cite{lsmr}.

\begin{proposition}\label{trace}
Assume $K(x)\le0$ for any $x\in\S$ and let $\mathfrak D$ be defined by $\eqref{d}$. Then, for any $\e>0$ there exists $C_\e$ such that for any $u\in H^1(\S)$ one has:
\beq\label{traceineq}
\frac{\left(\int_{\de\S}he^\frac u2\right)^2}{\int_\S|K|e^u}\le\frac{\left(\mathfrak D_M+\e\right)^2}4\int_\S|\nabla u|^2+C_\e,\qquad\text{where }\mathfrak D_M:=\max_{x\in\de\S}|\mathfrak D(x)|.
\eeq
\end{proposition}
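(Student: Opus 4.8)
The plan is to deduce \eqref{traceineq} from a sharp one–dimensional trace estimate obtained by integrating along the inward normal to $\de\S$, the constant $\tfrac14$ appearing as the square of the factor $\tfrac12$ in $\partial_\nu e^{u/2}=\tfrac12 e^{u/2}\partial_\nu u$. First, since $h=\mathfrak D\sqrt{|K|}$ on $\de\S$ and $|\mathfrak D|\le\mathfrak D_M$, one has the pointwise bound $|he^{u/2}|\le\mathfrak D_M\sqrt{|K|}e^{u/2}$, whence $\big(\int_{\de\S}he^{u/2}\big)^2\le\mathfrak D_M^2\big(\int_{\de\S}\sqrt{|K|}e^{u/2}\big)^2$; it is therefore enough to show that for every $\e>0$ there is $C_\e$ with
$$\Big(\int_{\de\S}\sqrt{|K|}\,e^{u/2}\Big)^2\le\Big(\tfrac14+\e\Big)\Big(\int_\S|\nabla u|^2\Big)\Big(\int_\S|K|e^u\Big)+C_\e\int_\S|K|e^u\qquad\text{for all }u\in H^1(\S),$$
since one may then choose $\e$ small and rescale by $\mathfrak D_M^2$. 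Both sides of \eqref{traceineq} are insensitive to adding a constant to $u$, so no normalisation is needed, and all the integrals are finite by the Moser--Trudinger inequalities \eqref{ChYgIneq}--\eqref{LiLiuIneq}.

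For the displayed inequality I would fix a collar $\{0\le t\le t_0\}$ of $\de\S$ with Fermi coordinates $(s,t)$ ($s$ arclength on $\de\S$, $t=\dist(\cdot,\de\S)$), on which the metric is uniformly equivalent to $ds^2+dt^2$. Setting $g:=\sqrt{|K|}\,e^{u/2}$, the fundamental theorem of calculus along $t\mapsto g(s,t)$ gives, for every $t\in[0,t_0]$,
$$g(s,0)\le g(s,t)+\int_0^{t_0}\Big(\tfrac12\,g\,|\partial_\tau u|+\tfrac{|\partial_\tau K|}{2\sqrt{|K|}}\,e^{u/2}\Big)(s,\tau)\,d\tau .$$
The essential point is the factor $\tfrac12$ in front of $g|\partial_\tau u|$. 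Averaging over $t\in[0,t_0]$, integrating over $s\in\de\S$, and applying Cauchy--Schwarz to $\int_{\de\S}\!\int_0^{t_0}g|\partial_\tau u|$ — first in $\tau$, against $\int_0^{t_0}g^2\,d\tau=\int_0^{t_0}|K|e^u\,d\tau$ and $\int_0^{t_0}|\partial_\tau u|^2\,d\tau\le\int_0^{t_0}|\nabla u|^2\,d\tau$, then in $s$ — and bounding $\tfrac1{t_0}\int_{\text{collar}}g$ in the same way, one arrives (when $K$ is bounded away from $0$, so that the $\partial_\tau K$ term is harmless) at
$$\int_{\de\S}\sqrt{|K|}\,e^{u/2}\le\Big(\tfrac12\,\|\nabla u\|_{L^2(\S)}+C(t_0)\Big)\Big(\int_\S|K|e^u\Big)^{1/2}.$$
Squaring and absorbing the cross term by Young's inequality ($C(t_0)\|\nabla u\|\le\e\|\nabla u\|^2+C_\e$) produces exactly the required inequality, with the coefficient $\tfrac14=(\tfrac12)^2$ — which is the reason it cannot be improved by this argument.

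The main obstacle is the $\partial_\tau K$ term when $K$ is allowed to vanish: then $\tfrac{|\partial_\tau K|}{2\sqrt{|K|}}e^{u/2}$ is not obviously dominated by $g$, and crudely bounding it by $Ce^{u/2}$ brings in a bulk integral $\int_\S e^{u/2}$ that is not controlled by $\int_\S|K|e^u$. To handle this I would use the Glaeser-type inequality $|\nabla K|^2\le2\|K\|_{C^2}|K|$ (which holds because $-K\ge0$ is $C^2$), giving that $\sqrt{|K|}$ is Lipschitz, that $\nabla K$ vanishes on $\{K=0\}$, and that $\tfrac{|\nabla K|}{\sqrt{|K|}}$ is bounded, together with the observation that a zero of $K$ on $\de\S$ is also a zero of $h=\mathfrak D\sqrt{|K|}$; then one localises, covering $\de\S$ by finitely many arcs on which either $K$ is bounded away from $0$ — where the previous step applies directly — or $|K|$ (and hence $h$) is uniformly small, where the contribution can be absorbed into the error term. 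The delicate point, and the one requiring the most care, is to perform this localisation \emph{without degrading the constant} $\tfrac14$: each local piece of $\int_{\de\S}\sqrt{|K|}e^{u/2}$ has to be paired with the portion of $\int_\S|K|e^u$ lying above it, and the pieces recombined by Cauchy--Schwarz so that the local gradient energies sum to at most $\int_\S|\nabla u|^2$. When $K<0$ everywhere — the setting of \cite{lsmr} — this difficulty does not arise and the second step above is essentially the whole proof.
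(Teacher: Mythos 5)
Your core mechanism is the same as the paper's: convert the boundary integral into a bulk integral of $\tfrac12\sqrt{|K|}e^{u/2}|\nabla u|$ by integrating along the normal (the paper applies the divergence theorem to $\Psi\phi_je^{u/2}$ with a vector field $\Psi$ equal to $\nu$ on $\de\S$, which is your Fermi--coordinate computation in disguise), then Cauchy--Schwarz, with the constant $\tfrac14=(\tfrac12)^2$ arising in exactly the same way; the final absorption of the lower--order term by Young's inequality is also identical. When $K$ is bounded away from zero near $\de\S$ your argument is complete and essentially equivalent to the paper's.

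The gap is precisely where you flag it, and the fix you sketch does not close it. By extracting $\mathfrak D_M$ on the boundary first and then transporting the weight $\sqrt{|K|}$ into the bulk by differentiating it, you create the term $\frac{|\partial_\tau K|}{2\sqrt{|K|}}e^{u/2}$; Glaeser only gives $\frac{|\nabla K|}{\sqrt{|K|}}\le C$, so this term is bounded by $Ce^{u/2}$, and $\int_\S e^{u/2}$ is neither dominated by $C\sqrt{\int_\S|K|e^u}$ when $K$ has zeros nor absorbable into the additive constant $C_\e$ (it can be of order $e^{c\|\nabla u\|^2}$). The localisation into arcs where $|K|$ is small does not repair this: on such an arc the boundary contribution is $\mathfrak D_M\sqrt{\sup|K|}\int_{\mathrm{arc}}e^{u/2}$, and running the trace argument on it produces a factor $\sqrt{\int_\S e^u}$ rather than $\sqrt{\int_\S|K|e^u}$; these are not comparable when $u$ concentrates in the zero set of $K$, so no smallness of $\sup|K|$ on the arc rescues the required multiplicative structure of \eqref{traceineq}. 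The paper avoids the issue entirely by localising $h$ and $K$ \emph{together}: it takes a partition of unity $\{\phi_j\}$ of $\de\S$ with supports so small that $\max_{\mathrm{spt}(\phi_j)}|h|\le\left(\mathfrak D_M+\tfrac\e2\right)\min_{\mathrm{spt}(\phi_j)}\sqrt{|K|}$, so that $|h|$ on $\mathrm{spt}(\phi_j)\cap\de\S$ is bounded by $\left(\mathfrak D_M+\tfrac\e2\right)\sqrt{|K(x)|}$ for \emph{every} $x$ in the two--dimensional support; the weight $\sqrt{|K|}$ is then already present in all the bulk integrals and $K$ is never differentiated. If you want to salvage your route, replace the global extraction of $\mathfrak D_M$ on $\de\S$ by this local extraction before integrating along the normal.
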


\begin{proof}
Fix $\e>0$ and take a partition of the unit $\{\phi_j\}_j$ of $\de\S$ with $\mathrm{spt}(\phi_j)$ so small that $\frac{\max_{\mathrm{spt}(\phi_j)}|h|}{\min_{\mathrm{spt}(\phi_j)}\sqrt{|K|}}\le\mathfrak D_M+\frac\e2$, which follows from the uniform continuity. Then, take a smooth vector field $\Psi:\S\to\R^2$ such that $\Psi|_{\de\S}=\nu$ and apply the divergence theorem to $\Psi\phi_je^u$; one has
$$\int_{\de\S}|h|e^\frac u2\le C\int_\S e^\frac u2+\frac{\mathfrak D_M+\frac\e2}2\sum_j\int_\S\sqrt{|K|}\phi_je^\frac u2|\nabla u|.$$
Then, applying the Cauchy-Schwartz inequality and the elementary inequality
$$A+B\le\sqrt{(1+\e)A^2+\left(1+\frac1\e\right)B^2},$$
one gets:
\begin{eqnarray*}
\left|\int_{\de\S}he^\frac u2\right|&\le&\frac{\mathfrak D_M+\frac\e2}2\int_\S\sqrt{|K|}e^\frac u2|\nabla u|+C\int_\S\sqrt{|K|}e^\frac u2\\
&\le&\sqrt{\int_\S|K|e^u}\left(\frac{\mathfrak D_M+\frac\e2}2\sqrt{\int_\S|\nabla u|^2}+C\right)\\
&\le&\sqrt{\left(\int_\S|K|e^u\right)\left(\frac{\left(\mathfrak D_M+\e\right)^2}4\int_\S|\nabla u|^2+C_\e\right)}
\end{eqnarray*}
\end{proof}

\begin{remark}
Proposition \ref{trace} is sharp in the sense that the constant $\frac{\mathfrak D_M^2}4$ in \eqref{traceineq} cannot be improved in any case.\\
To see this, fix a small region $\G\subset\de\S$ and its tubular neighborhood $\Omega\subset\S$ in such a way that, up to changing sign to $h$, which does not change inequality \eqref{traceineq}, $\frac{h(y)}{\sqrt{|K(x)|}}\ge\mathfrak D_M-\e$ for any $x\in\Omega,y\in\G$. We may also assume that, for some $\d>0$, the diffeomorphism $\Phi:\Omega\leftrightarrow[-\d,\d]\times[0,\d]$ is $\e$-close to the identity in $C^1$-norm.\\
Then, we take
$$u_n:=v_n\circ\Phi,\qquad\mbox{with}\qquad v_n(s,t):=\eta(s,t)\xi_n(t),\qquad\xi_n(t):=-2\log\left(1+nt\right)$$
and $\eta\in C^\infty_0([-\d,\d]\times[0,\d])$ such that
$$0\le\eta\le1,\qquad\eta_{[-\d+\e,\d-\e]\times[0,\d-\e]}\equiv1,\qquad\|\nabla\eta\|_\infty\le\frac C\e;$$
therefore,
\begin{eqnarray*}
\int_{\de\S}he^\frac{u_n}2&\ge&(1-C\e)\int_{-\d}^\d\left(h\circ\Phi^{-1}\right)(s,0)e^\frac{v_n(s,0)}2\mathrm ds\\
&\ge&(1-C\e)\left(\min_\G h\right)\int_{-\d+\e}^{\d+\e}e^\frac{\xi_n(0)}2\mathrm ds\\
&=&(1-C\e)\left(\min_\G h\right)2\d;\\
&&\\
\int_\S|K|e^{u_n}&\le&(1+C\e)\int_{-\d}^\d\mathrm ds\int_0^\d\left(|K|\circ\Phi^{-1}\right)(s,t)e^{v_n(s,t)}\mathrm dt\\
&\le&(1+C\e)\max_\Omega|K|\int_{-\d}^\d\mathrm ds\int_0^\d e^{\xi_n(t)}\mathrm dt\\
&\le&(1+C\e)\max_\Omega|K|\frac{2\d}n;\\
&&\\
\int_\S|\nabla u_n|^2&\le&(1+C\e)\int_{-\d}^\d\mathrm ds\int_0^\d|\nabla v_n(s,t)|^2\mathrm dt\\
&\le&(1+C\e)\int_{-\d}^\d\mathrm ds\int_0^\d\left((1+\e)\eta_n(s,t)^2\xi_n'(t)^2+\left(1+\frac1\e\right)\xi_n(t)^2|\nabla\eta_n(s,t)|^2\right)\mathrm dt\\
&\le&(1+C\e)2\d\int_0^\d\left(\xi_n(t)^2+\frac C{\e^3}\xi_n(t)^2\right)\mathrm dt\\
&\le&(1+C\e){8\d n}+C_\e\log^2n.
\end{eqnarray*}
Therefore, if one takes $\mathfrak D_0<\mathfrak D_M$ and $\e$ small enough,
$$\frac{\left(\int_\S he^\frac{u_n}2\right)^2}{\int_\S|K|e^{u_n}}-\frac{\mathfrak D_0^2}4\int_\S|\nabla u_n|^2\ge\left((1-C\e)\left(\mathfrak D_M-\e\right)^2-\mathfrak D_0^2\right)2\d n-C_\e\log^2n\underset{n\to+\infty}\to+\infty.$$\

More surprisingly, in some cases one cannot even take $\e=0$ in inequality \eqref{traceineq}.\\
In fact, consider the case $h\equiv1,K\equiv-1$ and assume $\de\S$ has a connected component $\G$ where the geodesic curvature $h_g$ has positive average and take, similarly as before, $u_n:=v_n\circ\Phi,v_n(s,t):=\zeta(t)\xi_n(t)$ with
$$\zeta\in C^\infty([0,\d]),\qquad0\le\zeta\le1,\qquad\zeta_{\left[0,\frac\d2\right]}\equiv1.$$
Since $\det D\Phi(s,t)=1-h_g(s)t+O\left(t^2\right)$, we have
\begin{eqnarray*}
\int_{\de\S}e^\frac{u_n}2&=&|\de\S|;\\
&&\\
\int_\S e^{u_n}&\le&\int_{\de\S}\mathrm ds\int_0^\d e^{\xi_n(t)}\left(1-h_g(s)t+O\left(t^2\right)\right)\mathrm dt\\
&=&\frac{|\de\S|}n-\left(\int_{\de\S}h\right)\frac{\log n}{n^2}+O\left(\frac1{n^2}\right);\\
&&\\
\int_\S|\nabla u_n|^2&\le&\int_{\de\S}\mathrm ds\int_0^\d\left(\zeta'(t)\xi_n(t)+\zeta(t)\xi_n'(t)\right)^2\left(1-h_g(s)t+O\left(t^2\right)\right)\mathrm dt\\
&&\int_{\de\S}\mathrm ds\int_0^\d\xi_n'(t)^2\left(1-h_g(s)t+O\left(t^2\right)\right)\mathrm dt\\
&=&4|\de\S|n-4\left(\int_{\de\S}h\right)\log n+O(1).
\end{eqnarray*}
Therefore,
$$\frac{\left(\int_\S e^\frac{u_n}2\right)^2}{\int_\S e^{u_n}}-\frac14\int_\S|\nabla u_n|^2=2|\de\S|\left(\int_{\de\S}h\right)\log n+O(1)\underset{n\to+\infty}\to+\infty.$$
It would be interesting to find out under which condition on the surface $\S$ inequality \eqref{traceineq} holds true with $\e=0$ and, in the cases when it does not hold true, what type of correction terms should be added.
\end{remark}\

\begin{proof}[Proof of Theorem \ref{thm:chizero}]\
	
\begin{enumerate}
\item Since $h\le0,h\not\equiv0$, then $\int_{\de\S}he^\frac u2<0$ for any $u\in\overline H^1(\S)$, therefore
$$H_0=H_0':=\left\{u\in\overline H^1(\S):\,\int_\S Ke^u>0\right\}.$$
By definition, for any $u\in H_0'$ one has
$$\mathcal J(u)=\frac12\int_\S|\nabla u|^2+2\frac{\left(\int_{\de\S}he^\frac u2\right)^2}{\int_\S Ke^u}\ge\frac12\int_\S|\nabla u|^2\underset{\|u\|\to+\infty}\to+\infty.$$
We will now show that $\mathcal J$ diverges to $+\infty$ also when $u$ approaches the boundary of the domain
$$\de H_0'=\left\{u\in\overline H^1(\S):\,\int_\S Ke^u=0\right\};$$
this will imply $\mathcal J$ is coercive on $H_0'$, hence it has a global minimizer solving \eqref{meanfieldeq}. Assume $H_0'\ni u_n\underset{n\to+\infty}\to u_0\in\de H_0'$; by weak continuity, one has
$$\int_{\de\S}he^\frac{u_n}2\underset{n\to+\infty}\to\int_{\de\S}he^\frac{u_0}2<0,\qquad\int_\S Ke^{u_n}\underset{n\to+\infty}\to\int_\S Ke^{u_0}=0,$$
therefore
$$\mathcal J(u_n)\ge2\frac{\left(\int_{\de\S}he^\frac{u_n}2\right)^2}{\int_\S Ke^{u_n}}\underset{n\to+\infty}\to+\infty,$$
which concludes the proof.\\

\item We have $\int_\S Ke^u>0$ for any $u\in\overline H^1(\S)$, therefore $\mathcal J(u)$ is defined for any $u\in\overline H^1(\S)$. Arguing as in case $(1)$, $\mathcal J$ is coercive on $\overline H^1(\S)$, hence it has a minimizer which solves
\beq\label{falsemeanfield}
\left\{\begin{array}{ll}
-\Delta u=2\frac{\left(\int_{\de\S}he^\frac u2\right)^2}{\left(\int_\S Ke^u\right)^2}Ke^u&\text{in }\S\\
\de_\nu u=-2\frac{\int_{\de\S}he^\frac u2}{\int_\S Ke^u}he^\frac u2&\text{on }\de\S.\\
\end{array}\right..
\eeq
However, solutions to \eqref{falsemeanfield} may not solve \eqref{meanfieldeq}, because $C(u)=-\frac{\int_{\de\S}he^\frac u2}{\int_\S Ke^u}$ may be zero or negative, both of which we want to rule out.\\
If $C(u)=0$, then the only solution to \eqref{falsemeanfield} is $u\equiv0$, which would give
$$\int_{\de\S}h=-C(u)\int_\S K=0,$$
in contradiction with the assumptions (see also Remark \ref{hzero}).
Suppose now $C(u)<0$; since \eqref{falsemeanfield} is invariant under changing sign to $h$ and $C(u)$ is odd with respect to $h$, then $u$ solves
$$\left\{\begin{array}{ll}
-\Delta u=2C(u)^2Ke^u&\text{in }\S\\
\de_\nu u=2C(u)(-h)e^\frac u2&\text{on }\de\S,\\
C(u)=-\frac{\int_{\de\S}(-h)e^\frac u2}{\int_\S Ke^u}>0
\end{array}\right..$$
Therefore, due to Proposition \ref{meanfield}, problem \eqref{doublecurv} with $-h$ instead of $h$ has a solution:
$$\left\{\begin{array}{ll}
-\Delta u=2Ke^u&\text{in }\S\\
\de_\nu u=-2he^\frac u2&\text{on }\de\S,
\end{array}\right..$$
However, multiplying the equation by $e^{-\frac u2}$ and integrating by parts gets
$$0>-\int_\S|\nabla u|^2e^{-\frac u2}=\int_\S Ke^\frac u2-\int_{\de\S}h>-\int_{\de\S}h,$$
again contradicting the assumptions. This shows that the minimizer on $\overline H^1(\S)$ actually solves \eqref{meanfieldeq}.\\

\item We have $\int_\S Ke^u<0$ and $\int_{\de\S}he^\frac u2>0$ for any $u\in\overline H^1(\S)$, therefore $H_0=\overline H^1(\S)$, hence in order to get minimizing solutions, we suffice to show coercivity at infinity. To this purpose, we take a small $\e$ such that $\mathfrak D(y)+\e<1$ for any $y\in\de\S$ and apply Proposition \ref{trace}:
$$\mathcal J(u)=\frac12\int_\S|\nabla u|^2-2\frac{\left(\int_{\de\S}he^\frac u2\right)^2}{\int_\S|K|e^u}\ge\underbrace{\frac{1-\left(\mathfrak D_M+\e\right)^2}2}_{>0}\int_\S|\nabla u|^2-C_\e\underset{\|u\|\to+\infty}\to+\infty.$$
Then we find a minimizing solution by applying the direct method.
\end{enumerate}
\end{proof}\

Theorem \ref{thm:chizero} can be extended by dropping the assumptions on the sign of $h$, but if $h$ is allowed to change sign we do not know if problem \eqref{doublecurv} can be solved for $h$ or $-h$.

\begin{theorem}\label{thm:hminush}
Assume $\chi(\S)=0$ and:
\begin{itemize}
\item $K(x)\le0$ for all $x\in\S$, $K\not\equiv0$, $\int_{\de\S}h\ne0$ and $|\mathfrak D(y)|<1$ for all $y\in\de\S$.
\end{itemize}
Then, problem \eqref{doublecurv} admits a solution for $h$ or $-h$.
\end{theorem}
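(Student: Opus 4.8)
The plan is to combine the coercivity estimate of case~$(3)$ of Theorem~\ref{thm:chizero} — which only requires $K\le0$, $K\not\equiv0$ and $\mathfrak D_M<1$ — with the ``false mean field'' trick of case~$(2)$, which is exactly what copes with the sign of $\int_{\de\S}he^\frac u2$ being no longer prescribed. I expect the only genuinely delicate point to be the exclusion of the trivial critical point $u\equiv0$, which is precisely what the new hypothesis $\int_{\de\S}h\ne0$ is there to handle.

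First I would set up the minimization. Since $K\le0$ and $K\not\equiv0$, we have $\int_\S Ke^u<0$ for every $u\in\overline H^1(\S)$, so the functional \eqref{j} is well defined and smooth on the whole of $\overline H^1(\S)$ — not merely on the subset $H_0$ — and reads
$$\mathcal J(u)=\frac12\int_\S|\nabla u|^2-2\,\frac{\left(\int_{\de\S}he^\frac u2\right)^2}{\int_\S|K|e^u},$$
exactly as in cases~$(2)$ and $(3)$ of Theorem~\ref{thm:chizero}. Picking $\e>0$ with $\mathfrak D_M+\e<1$, which is possible since $|\mathfrak D(\cdot)|<1$ on the compact set $\de\S$, Proposition~\ref{trace} yields
$$\mathcal J(u)\ge\frac{1-\left(\mathfrak D_M+\e\right)^2}2\int_\S|\nabla u|^2-C_\e\underset{\|u\|\to+\infty}\to+\infty,$$
so $\mathcal J$ is coercive and bounded below on $\overline H^1(\S)$; being also weakly lower semicontinuous (by the weak continuity of $u\mapsto\int_\S Ke^u$ and $u\mapsto\int_{\de\S}he^\frac u2$, as in the proof of Theorem~\ref{thm:chizero}), it attains its infimum at some $u_0$, which — just as in the proof of case~$(2)$ — solves \eqref{falsemeanfield}.

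It then remains to upgrade \eqref{falsemeanfield} to \eqref{meanfieldeq}, i.e.\ to arrange $C(u_0)>0$. Set $\beta_0:=\int_{\de\S}he^\frac{u_0}2$, so that $C(u_0)=-\beta_0/\int_\S Ke^{u_0}$ has the sign of $\beta_0$ (recall $\int_\S Ke^{u_0}<0$). If $\beta_0=0$, then \eqref{falsemeanfield} reduces to $-\Delta u_0=0$ in $\S$ and $\de_\nu u_0=0$ on $\de\S$, hence $u_0\equiv0$ and $\int_{\de\S}h=\beta_0=0$, contradicting the hypothesis (see Remark~\ref{hzero}); thus $\beta_0\ne0$. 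Now $\mathcal J$, the equation \eqref{falsemeanfield} and the assumptions on $K,h$ are all invariant under $h\mapsto-h$ (which fixes $K$ and sends $\mathfrak D$ to $-\mathfrak D$, $\int_{\de\S}h$ to $-\int_{\de\S}h$), so replacing $h$ by $-h$ when $\beta_0<0$ we may assume $\beta_0>0$; then $C(u_0)>0$, $u_0$ solves \eqref{meanfieldeq}, and Proposition~\ref{meanfield} produces a solution of \eqref{doublecurv} — for $h$, or for $-h$ in the case $\beta_0<0$. Either way \eqref{doublecurv} is solvable for $h$ or for $-h$, which is the assertion. Beyond the exclusion of $u_0\equiv0$ noted above, every ingredient here is a verbatim recombination of the arguments for Theorem~\ref{thm:chizero}, cases~$(2)$ and $(3)$.
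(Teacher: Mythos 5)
Your proposal is correct and follows the same route as the paper's own (much terser) proof: coercivity on all of $\overline H^1(\S)$ via Proposition \ref{trace} as in case $(3)$ of Theorem \ref{thm:chizero}, a minimizer of \eqref{falsemeanfield} as in case $(2)$, exclusion of $C(u_0)=0$ via the hypothesis $\int_{\de\S}h\ne0$, and the $h\mapsto-h$ symmetry to handle $C(u_0)<0$. The only difference is that you spell out details the paper leaves implicit.
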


As in Theorem \ref{thm:chizero}, this is a sub-case of Theorem 1.2 in \cite{lsmr}, which gives existence of solutions under assuming $\int_{\de\S}h>0$.

\begin{proof} As in case $(3)$ in Theorem \ref{thm:chizero}, $\mathcal J$ is well-defined and coercive on $\overline H^1(\S)$ and, as in case $(2)$, it has a minimizers solving \eqref{falsemeanfield}. $C(u)=0$ is excluded since it would give $\int_\S h=0$, so one may have either $C(u)>0$ or $C(u)<0$: in the former case, \eqref{doublecurv} has a solution for $h$; in the latter case, \eqref{doublecurv} has a solution for or $-h$.
\end{proof}\

In the case $\chi(\S)<0$ the arguments are similar as the case $\chi(\S)=0$.\\
In fact, even though the nonlinear terms $F_{\chi(\S)}(\a,\b)$ is different than $F_0(\a,\b)$, it has essentially the same asymptotic behavior for positive $\b$, whereas for negative $\b$ it is bounded from below. To be more precise, we have the following elementary lemma. 

\begin{lemma}\label{lemmaft}
For any $\e>0$ there exists $C_\e>0$ such that the function
\beq\label{ft}
f(t):=F_{\chi(\S)}(-1,t)=8\pi|\chi(\S)|\left(-\log\left(\sqrt{t^2+8\pi|\chi(\S)|}-t\right)+\frac t{\sqrt{t^2+8\pi|\chi(\S)|}-t}\right)
\eeq
verifies
$$f(t)\le(2+\e)t_+^2+C_\e.$$
\end{lemma}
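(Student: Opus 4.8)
The plan is to study the elementary function $f(t)$ defined in \eqref{ft} directly by asymptotic analysis, since it depends on a single real variable. First I would note that $f$ is smooth on all of $\R$ because $\sqrt{t^2+8\pi|\chi(\S)|}-t>0$ for every $t\in\R$ (the quantity under the root exceeds $t^2$), so the only issue is the growth of $f$ as $t\to\pm\infty$; on any compact interval $f$ is bounded and the estimate $f(t)\le(2+\e)t_+^2+C_\e$ is trivially true there by enlarging $C_\e$.

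Next I would treat the two tails separately. As $t\to-\infty$, one has $\sqrt{t^2+8\pi|\chi(\S)|}-t\sim -2t=2|t|\to+\infty$, so $-\log(\sqrt{t^2+8\pi|\chi(\S)|}-t)\to-\infty$, while the second term $\frac{t}{\sqrt{t^2+8\pi|\chi(\S)|}-t}\to\frac t{-2t}=-\frac12$ is bounded; hence $f(t)\to-\infty$ and in particular $f(t)\le C_\e=t_+^2+C_\e$ for $t$ negative and large in absolute value. As $t\to+\infty$, the key is the expansion $\sqrt{t^2+8\pi|\chi(\S)|}-t=\frac{8\pi|\chi(\S)|}{\sqrt{t^2+8\pi|\chi(\S)|}+t}=\frac{4\pi|\chi(\S)|}{t}(1+o(1))$, so $-\log(\sqrt{t^2+8\pi|\chi(\S)|}-t)=\log t-\log(4\pi|\chi(\S)|)+o(1)$, which is $O(\log t)$, hence negligible compared to $t^2$. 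For the second term, $\frac{t}{\sqrt{t^2+8\pi|\chi(\S)|}-t}=\frac{t(\sqrt{t^2+8\pi|\chi(\S)|}+t)}{8\pi|\chi(\S)|}=\frac{t^2+t\sqrt{t^2+8\pi|\chi(\S)|}}{8\pi|\chi(\S)|}=\frac{2t^2+O(1)}{8\pi|\chi(\S)|}\cdot\frac{8\pi|\chi(\S)|}{\text{?}}$; more carefully, $t\sqrt{t^2+8\pi|\chi(\S)|}=t^2\sqrt{1+8\pi|\chi(\S)|/t^2}=t^2+4\pi|\chi(\S)|+O(1/t^2)$, so the second term equals $\frac{2t^2+4\pi|\chi(\S)|+O(1/t^2)}{8\pi|\chi(\S)|}=\frac{t^2}{4\pi|\chi(\S)|}+\frac12+o(1)$. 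Multiplying by the prefactor $8\pi|\chi(\S)|$, the dominant contribution of $f(t)$ as $t\to+\infty$ is $8\pi|\chi(\S)|\cdot\frac{t^2}{4\pi|\chi(\S)|}=2t^2$, with lower-order corrections of size $O(\log t)$. Therefore $f(t)=2t^2+O(\log t)$ as $t\to+\infty$, and since $O(\log t)\le \e t^2+C_\e$ for suitable $C_\e$, we get $f(t)\le(2+\e)t^2+C_\e=(2+\e)t_+^2+C_\e$ for large positive $t$.

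Combining the three regimes — a compact interval, the negative tail, and the positive tail — and taking the maximum of the finitely many constants $C_\e$ involved, yields the claimed bound $f(t)\le(2+\e)t_+^2+C_\e$ for all $t\in\R$. I do not expect any serious obstacle here: the only mild care needed is to make the asymptotic expansion of $\sqrt{t^2+8\pi|\chi(\S)|}-t$ precise enough (to order $1/t$ inside, or equivalently $O(1)$ after the relevant multiplications) so that the coefficient of $t^2$ in the second term comes out exactly $\frac1{4\pi|\chi(\S)|}$ and the leading term of $f$ is exactly $2t^2$ rather than some other multiple; this is what forces the constant $2$ in the statement and explains why one cannot take $\e=0$ in general. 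One could alternatively substitute $s=\sqrt{t^2+8\pi|\chi(\S)|}-t$, so that $t=\frac{8\pi|\chi(\S)|-s^2}{2s}$ and $s\to 0^+$ corresponds to $t\to+\infty$, turning $f$ into $8\pi|\chi(\S)|\big(-\log s+\frac{8\pi|\chi(\S)|-s^2}{2s^2}\big)$, from which the behaviour $f\sim 8\pi|\chi(\S)|\cdot\frac{8\pi|\chi(\S)|}{2s^2}\sim 2t^2$ is immediate; this change of variables is perhaps the cleanest route.
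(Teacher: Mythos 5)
Your proof is correct. The paper in fact states Lemma \ref{lemmaft} without any proof, labelling it ``elementary,'' so there is no argument to compare against; your one-variable asymptotic analysis is exactly the kind of verification the authors leave to the reader. The key computation checks out: writing $c=8\pi|\chi(\S)|$, one has $\frac{t}{\sqrt{t^2+c}-t}=\frac{t\sqrt{t^2+c}+t^2}{c}=\frac{2t^2}{c}+\frac12+o(1)$ as $t\to+\infty$, so after multiplying by the prefactor $c$ the leading term is exactly $2t^2$, the logarithmic term is $c\log\frac{2t}{c}+o(1)=O(\log t)$ and is absorbed into $\e t^2+C_\e$, the negative tail tends to $-\infty$, and compact sets are trivial; your observation that the positive, unbounded $O(\log t)$ correction is precisely what prevents taking $\e=0$ is also accurate. (The only blemish is the garbled intermediate line ``$\frac{2t^2+O(1)}{8\pi|\chi(\S)|}\cdot\frac{8\pi|\chi(\S)|}{?}$,'' which you immediately supersede with the correct expansion, and a harmless typo ``$f(t)\le C_\e=t_+^2+C_\e$'' on the negative tail where $t_+=0$ anyway.)
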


In view of lemma \ref{lemmaft}, the case $\chi(\S)<0$ can still be treated using Proposition \ref{trace}.

\begin{proof}[Proof of Theorem \ref{thm:chinegative}]
From assuming $K\le0,K\not\equiv0$ we get $\int_\S Ke^u<0$ for any $u\in\overline H^1(\S)$, hence $H_{\chi(\S)}=\overline H^1(\S)$.\\
We may write the energy functional as
$$\mathcal J(u)=\frac12\int_\S|\nabla u|^2+4\pi|\chi(\S)|\log(-\a)+4\pi|\chi(\S)|\log(8\pi)-f\left(\frac\b{\sqrt{-\a}}\right),$$
where $\a,\b$ are as in \eqref{alphabeta} and $f$ is as in \eqref{ft}.\\
The first nonlinear term is uniformly bounded from below on $\overline H^1(\S)$ because of the Jensen's inequality:
$$\int_\S(-K)e^u=\int_\S e^{u+\log|K|}\ge|\S|e^{\frac1{|\S|}\int_\S(u+\log|K|)}=|\S|e^{\frac1{|\S|}\int_\S\log|K|}.$$
By assumption, we have
$$\mathfrak D_M^+:=\max_{x\in\de\S}\frac{h(x)}{\sqrt{|K(x)|}}=\max_{x\in\de\S}\frac{h_+(x)}{\sqrt{|K(x)|}}<1;$$
therefore, using Lemma \ref{lemmaft} and Proposition \ref{trace} with $h_+$ instead of $h$ we get, for $\e$ small enough:
\begin{eqnarray*}
\mathcal J(u)&\ge&\frac12\int_\S|\nabla u|^2+f\left(\frac\b{\sqrt{-\a}}\right)-C\\
&\ge&\frac12\int_\S|\nabla u|^2-(2+\e)\frac{\b_+^2}{-\a}-C_\e\\
&\ge&\frac12\int_\S|\nabla u|^2-(2+\e)\frac{\left(\int_{\de\S}h_+e^\frac u2\right)^2}{\int_\S|K|e^u}-C_\e\\
&\ge&\underbrace{\frac{2-(2+\e)\left(\mathfrak D_M^++\e\right)^2}4}_{>0}\int_\S|\nabla u|^2-C_\e\\
&\underset{\|u\|\to+\infty}\to&+\infty,
\end{eqnarray*}
hence $\mathcal J$ has minimizers which solve \eqref{meanfieldeq}.
\end{proof}

\

{\bf Acknowledgements:} The first author is partially supported by the group INdAM-GNAMPA project ``Propriet\`a qualitative delle soluzioni di equazioni ellittiche'' CUP\_E53C22001930001.\\ The second author is currently supported by the grant Juan de la Cierva Incorporación fellowship (JC2020-046123-I), funded by MCIN/AEI/10.13039/501100011033, and by the European Union Next Generation EU/PRTR. He is also partially supported by Grant PID2021-122122NB-I00 funded by MCIN/AEI/ 10.13039/501100011033 and by “ERDF A way of making Europe”. \\
The work was partially carried out during the first author's visit to University of Granada, to which he is grateful.\\
The authors wish to thank David Ruiz for many fruitful discussions and suggestions. They also express their gratitude for the suggestions and comments made by the anonymous referee.

\bibliographystyle{plain}
\bibliography{bls}

\begin{thebibliography}{10}

\bibitem{aubin}
Thierry Aubin.
\newblock {\em Some nonlinear problems in {R}iemannian geometry}.
\newblock Springer Monographs in Mathematics. Springer-Verlag, Berlin, 1998.

\bibitem{bcp}
Luca Battaglia, Sergio Cruz~Bl\'{a}zquez, and Angela Pistoia.
\newblock Prescribing nearly constant curvatures on balls.
\newblock {\em Proc. Roy. Soc. Edinburgh Sect. A}, published online.

\bibitem{bls}
Luca Battaglia and Rafael L\'{o}pez-Soriano.
\newblock A double mean field equation related to a curvature prescription
  problem.
\newblock {\em J. Differential Equations}, 269(4):2705--2740, 2020.

\bibitem{brendle}
Simon Brendle.
\newblock A family of curvature flows on surfaces with boundary.
\newblock {\em Math. Z.}, 241(4):829--869, 2002.

\bibitem{chang2}
K.~C. Chang and J.~Q. Liu.
\newblock A prescribing geodesic curvature problem.
\newblock {\em Math. Z.}, 223(2):343--365, 1996.

\bibitem{chyg}
Sun-Yung~A. Chang and Paul~C. Yang.
\newblock Conformal deformation of metrics on {$S^2$}.
\newblock {\em J. Differential Geom.}, 27(2):259--296, 1988.

\bibitem{sergio}
Sergio Cruz-Bl\'{a}zquez and David Ruiz.
\newblock Prescribing {G}aussian and geodesic curvature on disks.
\newblock {\em Adv. Nonlinear Stud.}, 18(3):453--468, 2018.

\bibitem{djadli}
Zindine Djadli.
\newblock Existence result for the mean field problem on {R}iemann surfaces of
  all genuses.
\newblock {\em Commun. Contemp. Math.}, 10(2):205--220, 2008.

\bibitem{mira-galvez}
Jos\'{e}~A. G\'{a}lvez and Pablo Mira.
\newblock The {L}iouville equation in a half-plane.
\newblock {\em J. Differential Equations}, 246(11):4173--4187, 2009.

\bibitem{asun}
Asun Jim\'{e}nez.
\newblock The {L}iouville equation in an annulus.
\newblock {\em Nonlinear Anal.}, 75(4):2090--2097, 2012.

\bibitem{lizhu}
Yanyan Li and Meijun Zhu.
\newblock Uniqueness theorems through the method of moving spheres.
\newblock {\em Duke Math. J.}, 80(2):383--417, 1995.

\bibitem{liliu}
Yuxiang Li and Pan Liu.
\newblock A {M}oser-{T}rudinger inequality on the boundary of a compact
  {R}iemann surface.
\newblock {\em Math. Z.}, 250(2):363--386, 2005.

\bibitem{lin}
Chang-Shou Lin.
\newblock An expository survey on the recent development of mean field
  equations.
\newblock {\em Discrete Contin. Dyn. Syst.}, 19(2):387--410, 2007.

\bibitem{liuhuang}
Pan Liu and Wei Huang.
\newblock On prescribing geodesic curvature on {$D^2$}.
\newblock {\em Nonlinear Anal.}, 60(3):465--473, 2005.

\bibitem{lsmr}
Rafael L\'{o}pez-Soriano, Andrea Malchiodi, and David Ruiz.
\newblock Conformal metrics with prescribed {G}aussian and geodesic curvatures.
\newblock {\em Ann. Sci. \'{E}c. Norm. Sup\'{e}r. (4)}, 55(5):1289--1328, 2022.

\bibitem{ruizsoriano}
Rafael L\'{o}pez-Soriano and David Ruiz.
\newblock Prescribing the {G}aussian curvature in a subdomain of {$\Bbb{S}^2$}
  with {N}eumann boundary condition.
\newblock {\em J. Geom. Anal.}, 26(1):630--644, 2016.

\bibitem{moser}
J.~Moser.
\newblock On a nonlinear problem in differential geometry.
\newblock In {\em Dynamical systems ({P}roc. {S}ympos., {U}niv. {B}ahia,
  {S}alvador, 1971)}, pages 273--280. Academic Press, New York, 1973.

\bibitem{zhang}
Lei Zhang.
\newblock Classification of conformal metrics on {${\bf R}^2_+$} with constant
  {G}auss curvature and geodesic curvature on the boundary under various
  integral finiteness assumptions.
\newblock {\em Calc. Var. Partial Differential Equations}, 16(4):405--430,
  2003.

\end{thebibliography}

\end{document}